\numberwithin{equation}{section}
\newtheorem{theorem}{Theorem}[section]
\newtheorem{corollary}[theorem]{Corollary}
\newtheorem{lemma}[theorem]{Lemma}
\newtheorem{proposition}[theorem]{Proposition}
\theoremstyle{definition}
\newtheorem{definition}[theorem]{Definition}
\newtheorem{remark}[theorem]{Remark}
\newtheorem{example}[theorem]{Example}
\newcommand{\Dleft}{[\hspace{-1.5pt}[}
\newcommand{\Dright}{]\hspace{-1.5pt}]}
\newcommand{\SN}[1]{\Dleft #1 \Dright}
\newcommand{\Id}{\mathbbmss{1}}
\newcommand{\rmd}{\textnormal{d}}
\newcommand{\rmh}{\textnormal{h}}
\newcommand{\rml}{\textnormal{l}}
\newcommand{\rmp}{\textnormal{p}}
\newcommand{\N}{\textnormal{N}}
\DeclareMathOperator{\w}{w}
\font\black=cmbx10 \font\sblack=cmbx7 \font\ssblack=cmbx5 \font\blackital=cmmib10  \skewchar\blackital='177
\font\sblackital=cmmib7 \skewchar\sblackital='177 \font\ssblackital=cmmib5 \skewchar\ssblackital='177
\font\sanss=cmss10 \font\ssanss=cmss8 
\font\sssanss=cmss8 scaled 600 \font\blackboard=msbm10 \font\sblackboard=msbm7 \font\ssblackboard=msbm5
\font\caligr=eusm10 \font\scaligr=eusm7 \font\sscaligr=eusm5  \font\fraktur=eufm10
\font\sfraktur=eufm7 \font\ssfraktur=eufm5 
\font\bsymb=cmsy10 scaled\magstep2
\def\all#1{\setbox0=\hbox{\lower1.5pt\hbox{\bsymb
       \char"38}}\setbox1=\hbox{$_{#1}$} \box0\lower2pt\box1\;}
\def\exi#1{\setbox0=\hbox{\lower1.5pt\hbox{\bsymb \char"39}}
       \setbox1=\hbox{$_{#1}$} \box0\lower2pt\box1\;}
\def\tx#1{{\fam0\relax#1}}
\def\sss#1{{\fam\ssfam\relax#1}}
\def\hpb#1{\setbox0=\hbox{${#1}$}
    \copy0 \kern-\wd0 \kern.2pt \box0}
\def\vpb#1{\setbox0=\hbox{${#1}$}
    \copy0 \kern-\wd0 \raise.08pt \box0}
\def\pmb#1{\setbox0\hbox{${#1}$} \copy0 \kern-\wd0 \kern.2pt \box0}
\def\pmbb#1{\setbox0\hbox{${#1}$} \copy0 \kern-\wd0
      \kern.2pt \copy0 \kern-\wd0 \kern.2pt \box0}
\def\pmbbb#1{\setbox0\hbox{${#1}$} \copy0 \kern-\wd0
      \kern.2pt \copy0 \kern-\wd0 \kern.2pt
    \copy0 \kern-\wd0 \kern.2pt \box0}
\def\pmxb#1{\setbox0\hbox{${#1}$} \copy0 \kern-\wd0
      \kern.2pt \copy0 \kern-\wd0 \kern.2pt
      \copy0 \kern-\wd0 \kern.2pt \copy0 \kern-\wd0 \kern.2pt \box0}
\def\pmxbb#1{\setbox0\hbox{${#1}$} \copy0 \kern-\wd0 \kern.2pt
      \copy0 \kern-\wd0 \kern.2pt
      \copy0 \kern-\wd0 \kern.2pt \copy0 \kern-\wd0 \kern.2pt
      \copy0 \kern-\wd0 \kern.2pt \box0}
\mathchardef\za="710B  
\mathchardef\zb="710C  
\mathchardef\zg="710D  
\mathchardef\zd="710E  
\mathchardef\zve="710F 
\mathchardef\zz="7110  
\mathchardef\zh="7111  
\mathchardef\zvy="7112 
\mathchardef\zi="7113  
\mathchardef\zk="7114  
\mathchardef\zl="7115  
\mathchardef\zm="7116  
\mathchardef\zn="7117  
\mathchardef\zx="7118  
\mathchardef\zp="7119  
\mathchardef\zr="711A  
\mathchardef\zs="711B  
\mathchardef\zt="711C  
\mathchardef\zu="711D  
\mathchardef\zvf="711E 
\mathchardef\zq="711F  
\mathchardef\zc="7120  
\mathchardef\zw="7121  
\mathchardef\ze="7122  
\mathchardef\zy="7123  
\mathchardef\zf="7124  
\mathchardef\zvr="7125 
\mathchardef\zvs="7126 
\mathchardef\zf="7127  
\mathchardef\zG="7000  
\mathchardef\zD="7001  
\mathchardef\zY="7002  
\mathchardef\zL="7003  
\mathchardef\zX="7004  
\mathchardef\zP="7005  
\mathchardef\zS="7006  
\mathchardef\zU="7007  
\mathchardef\zF="7008  
\mathchardef\zW="700A  
\mathchardef\zC="7009  
\newcommand{\be}{\begin{equation}}
\newcommand{\ee}{\end{equation}}
\newcommand{\bea}{\begin{eqnarray}}
\newcommand{\eea}{\end{eqnarray}}
\def\*{{\textstyle *}}
\newcommand{\R}{{\mathbb R}}
\newcommand{\s}{{\textstyle *}}
\newcommand{\ti}{\times}
\def\Sec{\sss{Sec}}
\def\Vect{\sss{Vect}}
\def\sT{{\sss T}}
\def\sV{{\sss V}}
\def\xi{\tx{i}}
\def\s*{{\scriptstyle *}}
\def\cO{\mathcal{O}}
\newcommand{\beas}{\begin{eqnarray*}}
\newcommand{\eeas}{\end{eqnarray*}}
\def\half{\frac{1}{2}}
\newcommand{\dd}{\mathbf{d}}
\begin{document}
\bibliographystyle{plain}

\author{Andrew James Bruce}
        \address{Mathematics Research Unit, University of Luxembourg, Maison du Nombre 6, avenue de la Fonte,
L-4364 Esch-sur-Alzette, Luxembourg.}\email{andrewjamesbruce@googlemail.com}

\date{\today}
\title{Connections adapted to non-negatively graded structures}

\begin{abstract}
Graded bundles are a particularly nice class of graded manifolds and represent a natural generalisation  of vector bundles. By exploiting the formalism of supermanifolds to describe Lie algebroids we define the notion of a \emph{weighted $A$-connection} on a graded bundle. In a natural sense weighted $A$-connections are adapted to the basic geometric structure of a graded bundle in the same way as  linear $A$-connections are adapted to the structure of a vector bundle. This notion generalises directly to multi-graded bundles and in particular we present  the notion of a \emph{bi-weighted $A$-connection} on a double vector bundle. We prove the existence of such adapted connections and use them to define (quasi-)actions of Lie algebroids on graded bundles.
\end{abstract}
\maketitle
\begin{small}
\noindent \textbf{MSC (2010)}: 16W25:~53B05;~53B15;~53D17;~58A50.\smallskip

\noindent \textbf{Keywords}: graded bundles;~ double vector bundles;~Lie algebroids;~connections;~actions.
\end{small}

\tableofcontents

\section{Introduction}\label{sec:Intro}
The notion of a \emph{connection} in many different guises, such as a covariant derivative  or a horizontal distribution,   can be found throughout modern differential geometry.  Of central importance are linear connections on vector bundles, of which affine connections constitute a prime example. Connections often lead to geometric invariants and convenient formulations thereof. A prominent example of the r\v{o}le of connections in constructing invariants is to be found in  Chern--Weil theory of the characteristic classes of principal bundles. Another well-known example is the construction of the curvature tensors on a (pseudo-)Riemannian manifold via the Levi--Civita connection. In physics, connections are central to the notion of gauge fields such as the electromagnetic field. Connections also play a   r\v{o}le in geometric approaches to relativistic mechanics, Fedosov's approach to deformation quantisation, adiabatic evolution via the Berry phase, and so on. For a review of the impact of the theory of connections on classical and quantum field theory the reader my consult \cite{Mangiarotti:2000}.  In the context of classical mechanics and control theory, connections appear in certain problems related to reductions by symmetry groups and systems with constraints (see for example \cite{Block:1996, Cardin:1996,Marsden:1992}).\par
Also prevalent throughout differential geometry --  again in many forms -- are Lie algebroids as first described by Pradines \cite{Pradines:1974}. Loosely, a Lie algebroid can be viewed as a mixture of tangent bundles and Lie algebras. A little more carefully, a Lie algebroid is a vector bundle $\pi : A \rightarrow M$, that comes equipped with a Lie bracket on the space of sections, together with an anchor map $\rho: \Sec(A) \rightarrow \Vect(M)$, that satisfy some natural compatibility conditions. In particular, the anchor map is a Lie algebra homomorphism. With this in mind, there is a general mantra: \emph{whatever you can do with tangent bundles you can do with Lie algebroids}. Our general reference for the theory of Lie algebroids and Lie groupoids is the book by Mackenzie \cite{Mackenzie:2005}.\par 
Given a Lie algebroid $(A, [-,-], \rho )$, the notion of an \emph{$A$-connection}  on a general fibre bundle was first given by Fernandes \cite{Fernandes:2002}. He put this notion to good use by discussing holonomy and characteristic classes of Lie algebroids. His approach was to define a generalisation of a horizontal distribution in which in part of the definition a Lie algebroid is used rather than a tangent bundle. The notion of a \emph{linear $A$-connection} on a vector bundle is clear: the resulting connection must respect the linear structure. \par 
In this paper, we examine the related concept of a connection on a graded bundle (see Grabowski \& Rotkiewicz \cite{Grabowski:2012}) that is in a precise sense adapted to the graded structure. We will call such connections \emph{weighted $A$-connections} as we allow them to take their values in Lie algebroids.\par 
Graded bundles are a particular `species' of non-negatively graded manifold (see Voronov \cite{Voronov:2001}), and one that has a very well behaved mathematical structure.  Without details, a graded bundle is a fibre bundle (in the category of smooth manifolds) for which one can assign a  weight of zero to the base coordinates and a non-zero weight to the fibre coordinates. Admissible changes of local coordinates respect this assignment of weight. It is an amazing fact that the structure of a graded bundle $F$, is completely  encoded in a smooth action of the multiplicative reals $\rmh : \R \times F \rightarrow F$, a \emph{homogeneity structure} in the language of Grabowski \& Rotkiewicz. Other than smooth, there is no further condition on this action.  Principal examples of graded bundles include higher order tangent bundles and vector bundles. Indeed, a vector bundle can be considered as a graded bundle by assigning a weight of zero to the base coordinates and a weight of one to the fibre coordinates. This assignment is respected changes of coordinates as all such transformation laws are linear in the fibre coordinates.  The ethos one can take is that graded bundles are `non-linear' vector bundles, and so the question of connections that in some sense respect the graded structure is a natural one. Moreover, as the r\^{o}le of graded bundles in geometric mechanics becomes more prevalent (see \cite{Bruce:2015,Grabowska:2014,Grabowska:2015}) the question of connections adapted to the structure of a graded bundle may become a practical one. \par 
Following  Va\v{\i}ntrob \cite{Vaintrob:1997}, we will understand a Lie algebroid to be a particular kind of Q-manifold, that is a supermanifold equipped with an odd vector field that `squares to zero'. This supermanifold picture of Lie algebroids will heavily influence our approach to connections. In particular, we will start from a definition of a weighted $A$-connection as a certain Grassmann odd vector field $\nabla$ on the supermanifold $\Pi A\times_M F$, build from a Lie algebroid $A$ and a graded bundle $F$ both over the same base manifold $M$. The key point is that a weighted $A$-connection by definition must be projectable to the homological vector field $\rmd_A \in \Vect(\Pi A)$ encoding the Lie algebroid structure. Moreover, the weight of a weighted  $A$-connection is zero with respect to the graded structure of $F$. This condition on the weight is enforced by the desire for the connection to respect the structure of the graded bundle. We will make these statements more precise in due course.  If the weighted $A$-connection is `homological', then we have a \emph{flat weighted $A$-connection}.  For the case of a  linear connection on a vector bundle, this `odd vector field' approach is a slight reformulation of  the notion of a Koszul connection. When the graded bundle in question is a vector bundle and $A$ is any Lie algebroid over the same base manifold, we recover the notion of  linear $A$-connections as given by Fernandes \cite{Fernandes:2002}, also see Vitagliano  \cite[Example 6.]{Vitagliano:2016} where a Q-manifold approach to flat connections is presented. As another example, we recover the notion of a \emph{homogeneous nonlinear connection} when the graded bundle is a higher order tangent bundle and the Lie algebroid is  the tangent bundle (see \cite{Andres:1989}). The general situation regarding weighted $A$-connections can be seen as a mixture of these two extreme examples. \par 
We show, by  using the results of K\v{r}i\v{z}ka \cite{Krizka:2008} on the existence of linear $A$-connections, that weighted $A$-connections exist for any Lie algebroid and any graded bundle over the same base manifold. The methodology is to use the fact that all graded bundles are non-canonically isomorphic to split graded bundles, i.e., a Whitney sum of graded vector  bundles (see \cite{Bruce:2016}). One can then consider splittings of graded bundles as \emph{gauge transformations} and use this to \emph{define} a weighted $A$-connection given a linear $A$-connection on the associated Whitney sum of graded vector bundles. This construction leads to the fact  that all weighted $A$-connections are gauge equivalent to linear $A$-connections on a split graded bundle. Naturally, we will make this precise in due course.\par 
It is well-known that \emph{all} fibre bundles admit connections. This follows from the fact that connections can be identified with sections of the first jet bundle of the fibre bundle under study, and as this is an affine bundle, sections always exist. Thus, considered as just a fibre bundle, we know that a graded bundle always admits a connection.  However, this does not tell us that connections that respect some additional structure, in this case, the graded structure,  exist. For example, it  is well-known that for a holomorphic vector bundle over a complex manifold, the Atiyah class is the obstruction to the existence of a holomorphic connection, see Atiyah \cite{Atiyah:1957}. However, considered as just a vector bundle,  we know that linear connections always exist.  \par 
Passing from the notion of a  weighted $A$-connection on a graded bundle to a \emph{multi-weighted $A$-connection} on a $n$-fold graded bundle is only a matter of taking into account the additional  compatible homogeneity structures. Following Grabowski \& Rotkiewicz \cite{Grabowski:2009} (also see Voronov \cite{Voronov:2001}), we know that the structure of a double vector bundle can be described in terms of a pair of mutually commuting (regular) homogeneity structures. This reformulation is very economical and workable  as compared with the original  definition of Pradines \cite{Pradines:1974} in terms of a pair of addition operations that are compatible in a specific sense. In particular, using double graded bundles to capture the structure of a double vector bundle allows for a notion of a connection adapted to the structure in much the same way as a linear connection is  adapted to the structure of a vector bundle. We will refer to these adapted connections as \emph{bi-weighted $A$-connections} on double vector bundles.\par 
Canonical examples of double vector bundles include the tangent and cotangent bundle of vector bundles and so such objects naturally appear in differential geometry. Moreover, double vector bundles play a fundamental r\^{o}le in geometric mechanics in the spirit of Tulczyjew and generalisations thereof to Lie algebroids,   see for example  Grabowska, Grabowski \& Urba\'{n}ski  \cite{Grabowska:2006}. Connections are typically needed when dealing with time-dependent mechanics and relativistic systems where they essentially correspond to choosing an observer. Thus, it is  plausible that connections adapted to double vector bundles could be of use in geometric mechanics.  \par 
As a general remark, it seems that the power and elegance of describing Lie algebroids as Q-manifolds   has not been fully utilised in the existing literature. The descriptions of vector bundles and double vector bundles in terms of (regular) homogeneity structures have also not been widely employed. Both of these ideas are central to this paper.

\smallskip 

\noindent \textbf{Main Results.} Amongst other  results contained within this paper we have:
\begin{itemize}
\item Theorem \ref{thm:existence connections} that states the existence of weighted $A$-connections for any Lie algebroid and graded bundle over the same base manifold. Proposition \ref{prop:gaugeequiv} and Corollary \ref{coro:gaugeequiv} tell us that weighted $A$-connections on a graded bundle are gauge equivalent to linear $A$-connections on the corresponding split graded bundle, i.e., the associated Whitney sum of graded vector bundles.
\item Theorem \ref{thm:actions} that states the one-to-one correspondence between flat weighted $A$-connections and infinitesimal actions of Lie algebroids on graded bundles that respect the graded structure.
\item Theorem \ref{thm:existence bi connections} that states the existence of  bi-weighted $A$-connections adapted to the structure of double vector bundles.
\end{itemize}

\smallskip
\noindent \textbf{Impetus.} This work arose from  discussions with Janusz Grabowski and Luca Vitagliano in regards to the relation between weighted Lie algebroids and representations up to homotopy of Lie algebroids. These discussions accumulated to the paper  \cite{Bruce:2017}. It was early in this collaboration that it was realised that phrasing connections in terms of odd vector fields, so something akin to weakening action Lie algebroids, was an economical and unifying framework well suited to the category of graded bundles. The question of the existence of connections adapted to the structure of a graded bundle was also raised in informal discussions with Miko{\l}aj Rotkiewicz. 

\smallskip
\noindent \textbf{Possible Generalisations.} There are at least two clear directions for possible future work:
\begin{enumerate}
\item connections on filtered bundles, i.e., polynomial bundles with permissiable coordinate changes being filtered rather than graded  (see \cite{Bruce:2018}), and 
\item connections with values in weighted Lie algebroids (see \cite{Bruce:2014a,Bruce:2015,Bruce:2016}).
\end{enumerate}
As examples of both filtered bundles and weighted Lie algebroids in relation to geometric mechanics and classical field theory are plentiful, the study of connections in this context could prove to be rich. 

\smallskip

\noindent\textbf{Arrangement.} In Section \ref{sec:preliminaries} we recall the basics of the theory of supermanifolds, graded bundles and Lie algebroids as needed throughout the rest of this paper.  The informed reader may safely skip this section. The bulk of the work is to be found in Section \ref{sec:WeightedCon} were new results are presented. It is in this section that the notion of a  weighted $A$-connection is given and their existence is established. We conclude this paper in Section \ref{sec:MultiWeightedCon} where we modify the notion of a connection to the setting of multi-graded bundles, examples of which include double vector bundles.

\section{Graded bundles and Lie algebroids}\label{sec:preliminaries}
\subsection{Supermanifolds}
 We understand a \emph{supermanifold} $\mathcal{M} := (|\mathcal{M}|, \:  \cO_{\mathcal{M}})$ of dimension $n|m$ to be a locally superringed space that is locally isomorphic to $\mathbb{R}^{n|m} := \big (\R^{n}, C^{\infty}(\R^{n})\otimes \Lambda(\theta^{1}, \cdots, \theta^{m}) \big)$. In particular,  given any point on $|\mathcal{M}|$ we can always find a `small enough' open neighbourhood $|U|\subseteq |\mathcal{M}|$ such that we can  employ local coordinates $x^{A} := (x^{a} , \theta^{\alpha})$ on $\mathcal{M}$, where $x^{a}$  and $\theta^\alpha$ are, respectively,  commuting and anticommuting coordinates. We will call (global) sections of the structure sheaf \emph{functions}, and often  denote the supercommutative algebra of all functions as $C^{\infty}(\mathcal{M})$. \emph{Morphisms of supermanifolds} are morphisms of locally superringed spaces. The underlying smooth manifold $|\mathcal{M}|$ we refer to as the \emph{reduced manifold}.    In particular, morphisms preserve the $\mathbb{Z}_2$-grading of the structure sheaf. The Grassmann parity of an object $O$  will be denoted by `tilde', i.e., $\widetilde{O} \in \mathbb{Z}_2$.  By `even' and `odd' we will be referring the Grassmann parity of  the objects in question.  Importantly, we have a \emph{chart theorem} that allows us to (locally) describe morphisms of supermanifolds in terms of local coordinates in much the same way as one can on smooth manifolds. We will make heavy use of local coordinates and employ the standard abuses of notation when it comes to describing morphisms. 
\begin{example}
Consider a vector bundle in the category of smooth manifolds  $\pi : E \rightarrow M$. We can employ adapted local coordinates of the form  $(x^a, u^i)$ together with the admissible changes of local coordinates
\begin{align*}
& x^{a'} = x^{a'}(x), && u^{i'} = u^j T_j ^{\:\: i'}(x).
\end{align*} 
From this datum, we can construct a supermanifold by applying the \emph{parity reversion functor} $\Pi$. The obtained supermanifold $\Pi E$, which we will refer to as an \emph{antivector bundle}, comes equipped with local coordinates $(x^a, \zx^i)$, with the same admissible changes as above, but now $\zx^i$ are anticommuting.
\end{example}
The \emph{tangent sheaf} $\mathcal{T}\mathcal{M}$ of a supermanifold $\mathcal{M}$ is the sheaf of derivations of sections of the structure sheaf -- this is, of course, a sheaf of locally free $\cO_{\mathcal{M}}$-modules. Sections of the tangent sheaf we refer to as \emph{vector fields} and denote the $C^\infty(\mathcal{M})$-module of vector fields as $\Vect(\mathcal{M})$. Under the $\mathbb{Z}_2$-graded commutator bracket the space of vector fields on a supermanifold forms a Lie superalgebra, i.e., we have 
\begin{align*}
& [X,Y] = {-}(-1)^{\widetilde{X} \widetilde{Y}} [Y,X],\\
& [X, [Y,Z]] = [[X,Y],Z] + (-1)^{\widetilde{X} \widetilde{Y}} [Y,[X,Z]].
\end{align*}
Importantly, vector fields on a supermanifold can be `localised' in the sense that they can be written using local coordinates in terms of partial derivatives, i.e., 
$$X|_{|U|} = X^A(x)\frac{\partial}{\partial x^A},$$
in much the same way as one would write a vector field on a manifold. We will generally drop the restriction of objects to open subsets of the reduced manifold when writing things locally in terms of coordinates.

\begin{definition}[Adapted from \cite{Alexandrov:1997}]
A \emph{Q-manifold}  is a supermanifold  $\mathcal{M}$, equipped with a distinguished  odd vector field $Q\in \Vect(\mathcal{M})$  that `squares to zero', i.e., $Q^{2} = \half [Q,Q] =0$. The vector field $Q$ is referred to as a \emph{homological vector field} or a \emph{Q-structure}. A \emph{morphism of Q-manifolds} $\phi: (\mathcal{M}, Q) \rightarrow (\mathcal{M}^\prime , Q^\prime)$ is a morphism of supermanifolds that furthermore relates the homological vector fields, i.e.,
$$Q \circ \phi^*  = \phi^* \circ Q^\prime.$$
\end{definition}
For more details of supermanifolds, the reader may consult, for instance, Carmeli, Caston \& Fioresi \cite{Carmeli:2011},  Manin \cite{Manin:1997}  and Varadarajan \cite{Varadarajan:2004}. For a comparison of the `algebro-geometric approach' with the `concrete approach', the reader may consult Rogers \cite{Rogers:2007}.
\subsection{Graded bundles}
For a short review of the theory of graded bundles the reader my consult \cite{Bruce:2017a}. Our general understanding of graded (super)manifolds will be in the sense of  Voronov \cite{Voronov:2001} (also see \v{S}evera \cite{Severa:2005} and Roytenberg \cite{Roytenberg:2001}). We will  only  consider manifolds $F$ that carry a non-negative grading that is associated with a smooth action $\rmh:\R\ti F\to F$ of the monoid $(\R,\cdot)$ of multiplicative reals. We will not explicitly discuss supermanifolds in this subsection.  Such actions   are referred  to as  \emph{homogeneity structures} (see Grabowski \& Rotkiewicz \cite{Grabowski:2012}). This action reduced to $\R_{>0}$ is the one-parameter group of diffeomorphism integrating the \emph{weight vector field}, thus the weight vector field is, in this case, \emph{h-complete}  and only \emph{non-negative integer weights} are allowed (see \cite{Grabowski:2013,Grabowski:2012}). Thus the algebra $\mathcal{A}(F)\subset C^\infty(F)$ spanned by homogeneous functions is $\mathcal{A}(F) =  \bigoplus_{i \in \mathbb{N}}\mathcal{A}^{i}(F)$, where $\mathcal{A}^{i}(F)$ consists of homogeneous functions of degree $i$. We will denote the weight of a homogeneous function $f \in \mathcal{A}^{i}(F)$ (and similar for other geometric objects) as $\w(f) = i$. \par
For $t \neq 0$ the action $\rmh_{t}$ is a diffeomorphism of $F$ and, when $t=0$, it is a smooth surjection $\tau=\rmh_0$ onto $F_{0}=:M$, with the fibres being diffeomorphic to $\R^N$, for the appropriate $N \in \mathbb{N}$.  Thus, the smooth manifolds obtained are particular kinds of \emph{polynomial bundles} $\tau:F\to M$, i.e., fibrations which locally look like $U\times\R^N$    ($U \subset M$ open) and the change of coordinates (for a certain choice of an atlas) are polynomial in $\R^N$. For this reason graded manifolds with non-negative weights \emph{and} h-complete weight vector fields are also known as \emph{graded bundles}.
\begin{example}\label{e1}
Consider a manifold $M$ and $\dd=(d_1,\dots,d_k)$, with positive integers $d_i$. The trivial fibration $\tau:M\times\R^\dd\to M$, where
$\R^\dd=\R^{d_1}\times\cdots\times\R^{d_k}$, is canonically a graded bundle with the homogeneity structure $\rmh^\dd$ given by $\rmh_t(x,y)=(x,\rmh_t^\dd(y))$, where
\begin{equation}\label{hsm}
\rmh_t^\dd(y_1,\dots,y_k)=(t\cdot y_1,\dots, t^k\cdot y_k)\,,\quad y_i\in\R^{d_i}\,.
\end{equation}
\end{example}
\begin{theorem}[Grabowski--Rotkiewicz \cite{Grabowski:2012}]\label{theorem:1}
Any graded bundle $(F,\rmh)$ is a locally trivial fibration $\tau:F\to M$ with a typical fibre $\R^\dd$, for some $\dd=(d_1,\dots,d_k)$, and the homogeneity structure locally equivalent to the one in Example \ref{e1}, so that the transition functions are graded isomorphisms of $\R^\dd$. In particular, any graded space is diffeomorphically equivalent with $(\R^\dd,\rmh^\dd)$ for some $\dd$. 
\end{theorem}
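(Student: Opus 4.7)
The plan is to work locally around a fixed point $m \in M$ and construct adapted coordinates in which $\rmh$ takes the standard form of Example \ref{e1}. The monoid law $\rmh_s \circ \rmh_0 = \rmh_0$ shows that $M = \rmh_0(F)$ consists entirely of fixed points of every $\rmh_t$, and hence the weight vector field $\Delta$ (the generator of $t \mapsto \rmh_{\rme^t}$ on $\R_{>0}$) vanishes precisely on $M$. Its linearisation $\Delta_m \colon T_m F \to T_m F$ satisfies $\ker \Delta_m = T_m M$, and smoothness of $\rmh$ at $t = 0$ together with h-completeness forces $\Delta_m$ to be diagonalisable with spectrum in $\{0, 1, \ldots, k\} \subset \mathbb{N}$ for some $k \geq 0$: non-integer or negative eigenvalues would produce factors $t^{\alpha}$ with $\alpha \notin \mathbb{N}$ in $\rmh_t^*$, and a non-trivial Jordan block would introduce a $\log t$ term, neither of which is smooth at $t = 0$.

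The key construction is as follows. Pick coordinates $(x^a)$ on a chart $U \subset M$ around $m$ and pull them back to weight-$0$ functions on $\tau^{-1}(U)$, where $\tau = \rmh_0$. For each $i \in \{1, \ldots, k\}$, choose smooth functions $\tilde{y}^I_i$ on $\tau^{-1}(U)$ whose differentials at $m$ span the $i$-eigenspace $V_i^* \subset T_m^* F$ of the transposed action of $\Delta_m$, and then homogenise via
\begin{equation*}
y^I_i := \frac{1}{i!}\,\frac{\partial^i}{\partial t^i}\bigl(\rmh_t^* \tilde{y}^I_i\bigr)\Big|_{t=0}.
\end{equation*}
A direct computation using $\rmh_s \circ \rmh_t = \rmh_{st}$ and the change of variables $u = st$ yields $\rmh_s^* y^I_i = s^i\, y^I_i$, so $\w(y^I_i) = i$; moreover, because the differential at $m$ of a weight-$j$ function necessarily lies in $V_j^*$, the projection-onto-weight-$i$ nature of the homogenisation preserves the chosen differentials at $m$. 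Consequently $(x^a, y^I_i)$ have linearly independent differentials at $m$ and, by the inverse function theorem, give a local chart on a neighbourhood of the fibre $\tau^{-1}(m)$ (after possibly shrinking $U$) in which $\rmh_t$ acts exactly as in Example \ref{e1}.

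Two such adapted charts must be related by weight-preserving transition maps, because the subalgebras $\mathcal{A}^i$ are intrinsically defined by $\rmh$; hence they are graded isomorphisms of $\R^{\dd}$. This simultaneously establishes local triviality and identifies the typical fibre, and the concluding statement about graded spaces follows as the special case $M = \{\mathrm{pt}\}$, in which the construction above produces a global diffeomorphism onto $(\R^{\dd}, \rmh^{\dd})$. The main obstacle is the linearisation step: one must justify that $\Delta$ is smoothly linearisable near each $m \in M$ with non-negative integer spectrum. This is precisely where h-completeness is essential, as the extension of the $\R_{>0}$-action to $t = 0$ rules out the Poincaré–Dulac-type obstructions that would otherwise prevent the homogenisation formula from producing enough independent homogeneous coordinates; once linearisation is in hand, the remainder is a routine coordinate argument.
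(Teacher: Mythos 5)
The paper does not prove this statement at all --- it is quoted verbatim from Grabowski--Rotkiewicz \cite{Grabowski:2012} --- so there is no in-paper proof to compare against. Your strategy (homogenising arbitrary local functions via $\frac{1}{i!}\partial_t^i(\rmh_t^*\tilde{y})|_{t=0}$, checking that the monoid law forces $\rmh_s^*y_i = s^i y_i$, and reading off the integer spectrum of the linearised action from smoothness at $t=0$) is essentially the strategy of the cited source, and those steps are correct as you present them.

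There is, however, a genuine gap at the decisive step. The inverse function theorem gives you a chart on a neighbourhood of the \emph{point} $m$, not on a neighbourhood of the \emph{fibre} $\tau^{-1}(m)$; the latter is exactly the content of local triviality and is the non-trivial global part of the theorem. A priori the map $(x^a, y^I_i)\colon \tau^{-1}(U) \to U\times\R^\dd$ could fail to be injective, fail to be submersive, or fail to be surjective away from $m$. The repair uses the action itself: for $t\neq 0$ each $\rmh_t$ is a diffeomorphism, every $p\in\tau^{-1}(m)$ satisfies $\rmh_t(p)\to m$ as $t\to 0$, and the homogeneous coordinates scale as $y^I_i(\rmh_t(p)) = t^i y^I_i(p)$. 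One then propagates independence of differentials, injectivity, and surjectivity onto $U'\times\R^\dd$ from a small neighbourhood of $m$ out along the whole fibre (e.g.\ if $(x,y)(p)=(x,y)(q)$ then $(x,y)(\rmh_t p)=(x,y)(\rmh_t q)$ for all $t$, which for small $t>0$ forces $\rmh_t p=\rmh_t q$ and hence $p=q$). Without this argument you have only established that the graded structure is standard near the zero section, which is strictly weaker than the theorem. A secondary, smaller point: you should also say a word about why the degree $k$ and the multiplicities $d_i$ are locally constant in $m$ (this falls out of the chart once constructed, but it is used implicitly when you speak of ``the'' typical fibre $\R^\dd$).
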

\begin{remark}
The equivalence of homogeneity structures acting on supermanifolds and graded super bundles is established in J\'{o}\'{z}wikowski \& Rotkiewicz \cite{Jozwikowski:2016}.  
\end{remark}
It follows that on  a  graded bundle, one can always find an  atlas of $F$  consisting of charts for which we  have homogeneous local coordinates $(x^a, y_w^I)$, where $\w(x^a) =0$ and  $\w(y_w^I) = w$ with $1\leq w\leq k$, for some $k \in \mathbb{N}$ known as the \emph{degree}  of the graded bundle.  Note that a graded bundle of degree $k$ is,  by definition,  also  a graded bundle of degree $l$ for $l\ge k$. However, there is always a \emph{minimal degree} and this is what we usually mean by degree.  The index  $I$ should be considered as a ``generalised index" running over all the possible weights. The label $w$ is largely redundant, but it will come in very useful when checking the weight of various (local) expressions.  The local changes of coordinates  respect the weight and hence are polynomial for non-zero weight coordinates, thus they are of the form:
\begin{eqnarray}\label{eqn:translawsGr}
x^{a'} &=& x^{a'}(x),\\
\nonumber y^{I'}_{w} &=& y^{J}_{w} T_{J}^{\:\: I'}(x) + \sum_{\stackrel{1<n  }{w_{1} + \cdots + w_{n} = w}} \frac{1}{n!}y^{J_{1}}_{w_{1}} \cdots y^{J_{n}}_{w_{n}}T_{J_{n} \cdots J_{1}}^{\:\:\: \:\:\:\:\: I'}(x),
\end{eqnarray}
where $T_{J}^{\:\: I'}$ are invertible and the $T_{J_{n} \cdots J_{1}}^{\:\:\: \:\:\:\:\:I'}$ are symmetric in lower indices.\par
A graded bundle  of   degree $k$  admits a sequence of  surjections (see Voronov \cite[Remark 4.2.]{Voronov:2001})
\begin{equation}\label{eqn:fibrations}
F:=F_{k} \stackrel{\tau^{k}_{k-1}}{\longrightarrow} F_{k-1} \stackrel{\tau^{k-1}_{k-2}}{\longrightarrow}   \cdots \stackrel{\tau^{3}_2}{\longrightarrow} F_{2} \stackrel{\tau^{2}_1}{\longrightarrow}F_{1} \stackrel{\tau^{1}}{\longrightarrow} F_{0} =: M,
\end{equation}
\noindent where $F_l$ itself is a graded bundle over $M$ of degree $l$ obtained from the atlas of $F_k$ by removing all coordinates of degree greater than $l$ (see the next paragraph). \par
Note that  $F_{1} \rightarrow M$ is a linear fibration and the other fibrations $F_{l} \rightarrow F_{l-1}$ are affine fibrations in the sense that the changes of local coordinates for the fibres are linear plus  an additional additive terms of appropriate weight.  The model fibres here are $\mathbb{R}^{n}$ for some $n \in \mathbb{N}$.\par
\begin{example}\label{exm:highertangent} 
The fundamental  example of a graded bundle is the higher tangent bundle $\sT^{k}M$, i.e., the $k$-th jets (at zero) of curves $\gamma: \mathbb{R} \rightarrow M$.
Given a smooth function $f$ on a manifold $M$, one can construct functions $f^{(\alpha)}$ on $\sT^k M$, where $0\leq \alpha\leq k$, which are referred to as the $(\alpha)$-lifts of $f$ (see \cite{Morimoto:1970}). These functions are  defined by
$$
f^{(\alpha)}([\gamma]_k):= \left.\frac{\rmd^\alpha}{\rmd t^\alpha}\right|_{t=0} f(\gamma(t)),
$$
where $[\gamma]_k\in \sT^k M$ is the class of the curve $\gamma:\R\rightarrow M$.
The smooth  functions $f^{(k)} \in C^\infty\big(\sT^k M \big)$ and $f^{(1)} \in C^\infty\big(\sT M \big)$ are called the $k$-complete lift and the tangent lift of $f$, respectively. Coordinate systems $(x^a)$ on $M$ gives rise to so-called adapted or homogeneous coordinate systems  $(x^{a, (\alpha)})_{0\leq \alpha \leq k}$ on $\sT^k M$ in which $x^{a, (\alpha)}$ is of weight $\za$. Fa\`{a} di Bruno's formula, i.e., repeated application of the chain rule, shows that the admissible changes of adapted coordinates are of the form \eqref{eqn:translawsGr}. The homogeneity structure on $\sT^{k}M$ can then be defined via local coordinates.
\end{example}
\begin{example}[Taken from \cite{Grabowska:2014}]
If $\pi : E \rightarrow M$ is a vector bundle, then $\bigwedge^k \sT E$ is a graded bundle of degree $k$ with respect to the projection $\bigwedge^k \sT \pi :\bigwedge^k \sT E \rightarrow \bigwedge^k \sT M$. For the $k=2$ case, we can employ homogeneous local coordinates  $(x^a, \dot{x}^{bc}, y^\alpha, y^{d \beta}, z^{\delta \gamma})$, where $\dot{x}^{ab} = - \dot{x}^{ba} $ and $z^{\alpha \beta} = - z^{\beta \alpha}$, of weight $0,0,1,1$ and $2$, respectively.   These coordinates correspond to the local decomposition of a bi-vector field as 
$$X = \frac{1}{2} \dot{x}^{ab} \frac{\partial}{\partial x^b} \wedge \frac{\partial}{\partial x^a} + y^{a \alpha} \frac{\partial}{\partial y^\alpha} \wedge \frac{\partial}{\partial x^a} + z^{\alpha \beta} \frac{\partial}{\partial y^\beta} \wedge \frac{\partial}{\partial y^\alpha} \,.   $$
The invariance of this local decomposition induces the required changes of coordinates on $\bigwedge^2 \sT E$.
\end{example}
\begin{remark}
 \v{S}evera \cite{Severa:2005} and Roytenberg \cite{Roytenberg:2001} defined  $\N$-manifolds in two different, but ultimately equivalent  ways. \v{S}evera's definition uses an $(\R ,\cdot)$ action such that $-1 \in \R$ acts as the parity operator. Roytenberg  defines an $\N$-manifold as a graded supermanifold (see \cite{Voronov:2001}) for which the Grassmann parity of the local coordinates is given by their weight mod $2$. Note that higher tangent bundles are \emph{not} $\N$-manifolds, nor can they directly be `superised' (see \cite{Bruce:2016b}). The reason we prefer to work with graded bundles rather than $\N$-manifolds is due to the fact that  higher tangent bundle are natural examples of the former. 
\end{remark}

A homogeneity structure is said to be \emph{regular} if
\begin{equation}\label{eqn:HomoReg}
\left.\frac{\rmd }{\rmd t}\right|_{t=0}\rmh_{t}(p) = 0   \hspace{25pt}\Rightarrow   \hspace{25pt}  p = \rmh_{0}(p),
\end{equation}
for all points $p \in F$. Moreover, if homogeneity structure is regular then the graded bundle has the structure of a vector bundle (see \cite{Grabowski:2009}). In the converse direction, in any chart adapted to the vector bundle structure we may assign weight zero to the base coordinates and weight one to the fibre coordinates. As the permissible changes of the linear coordinates are linear, this assignment of weight is preserved.  In fact, we need not assign a weight one  to the fibre coordinates, any positive integer weight will suffice. \par 
Morphisms between graded bundles are morphisms of smooth manifolds that   preserve the assignment of the weight. In other words, morphisms relate the respective homogeneity structures. More carefully, if we have two graded bundles (not necessarily of the same minimal degree)  $(F, \rmh)$ and $(F^\prime, \rmh^\prime)$, then a morphism between the two graded bundles is a morphism of smooth manifolds $\phi: F \rightarrow F^\prime$ that satisfies
$$\phi \circ \rmh_t  = \rmh^\prime_t \circ \phi,$$
for all  $t \in \R$. Evidently, morphisms of graded bundles can be composed as standard morphisms between smooth manifolds and so we obtain the \emph{category of graded bundles}. 
\begin{definition}
A \emph{split graded bundle} is a graded bundle of the form
$$F_k^{\:\textnormal{split}} :=E_{1} \times_{M}E_{2}\times_{M} \cdots  \times_{M} E_{k}, $$
where each $E_i \rightarrow M$ is a graded vector bundle of degree $i \in \mathbb{N}$, i.e, the fibre coordinates are assigned  the weight $i$. The homogeneity structure is provided by the natural homogeneity structure on the Whitney sum of graded vector bundles.
\end{definition}
We have a version of the famous Batchelor--Gaw\c{e}dzki theorem for graded bundles.
\begin{theorem}[Bruce-Grabowska-Grabowski \cite{Bruce:2016}]\label{theorem:splitting}
Any graded bundle $(F_k, \rmh)$ is non-canonically isomorphic (in the category of graded bundles)  to a split graded bundle $F_k^{\:\textnormal{split}}$. 
\end{theorem}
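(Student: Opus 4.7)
The plan is to proceed by induction on the degree $k$.

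For the base case $k=1$: the transition laws \eqref{eqn:translawsGr} restricted to weight-one coordinates have no lower-weight polynomial tail and reduce to $y^{I'}_1 = y^J_1 T_J^{\:\: I'}(x)$, so $F_1 \to M$ is already a vector bundle. A vector bundle is by definition a split graded bundle (take $E_1 := F_1$, with no higher factors).

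For the inductive step, suppose the statement holds for graded bundles of degree strictly less than $k$, and let $(F = F_k,\rmh)$ have degree $k$. Consider the projection $\tau^k_{k-1}: F_k \to F_{k-1}$ from the tower \eqref{eqn:fibrations}. Inspecting \eqref{eqn:translawsGr} in top weight, the change of coordinates for $y^I_k$ is linear in $y^J_k$ plus an additional term polynomial in the coordinates of weights $w<k$. Thus $\tau^k_{k-1}$ carries the structure of an affine bundle modelled on the pullback $(\tau_{k-1})^\ast E_k \to F_{k-1}$, where $E_k \to M$ is the vector bundle whose transition matrices are the linear parts $T_J^{\:\: I'}$ appearing for the weight-$k$ coordinates.

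The crucial step is to produce a \emph{graded} section $\sigma: F_{k-1} \to F_k$ of $\tau^k_{k-1}$, meaning a section that is a morphism of graded bundles. I would construct $\sigma$ by a partition of unity argument. Choose a locally finite open cover $\{U_\alpha\}$ of $M$ over which the whole tower \eqref{eqn:fibrations} is simultaneously trivial, together with a subordinate partition of unity $\{\rho_\alpha\}$ on $M$; pulling the $\rho_\alpha$ back to $F_{k-1}$ yields weight-zero functions. On each $U_\alpha$, adapted coordinates give a canonical section $\sigma_\alpha$ defined by $y^I_k = 0$, which intertwines the local homogeneity structure. Since $\tau^k_{k-1}$ is affine over a vector bundle, the formal convex combination $\sigma := \sum_\alpha (\rho_\alpha\circ\tau_{k-1})\,\sigma_\alpha$ makes sense globally: fixing any index $\alpha_0$ on an overlap, one writes $\sigma = \sigma_{\alpha_0} + \sum_\alpha \rho_\alpha(\sigma_\alpha - \sigma_{\alpha_0})$, and the differences are bona fide sections of $(\tau_{k-1})^\ast E_k$. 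Because the $\rho_\alpha$ are of weight zero and each $\sigma_\alpha$ is locally graded, the averaged section satisfies $\rmh_t\circ\sigma = \sigma\circ\rmh_t$ for every $t\in\R$.

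Given such a $\sigma$, the affine subtraction $p\mapsto \bigl(\tau^k_{k-1}(p),\, p - \sigma(\tau^k_{k-1}(p))\bigr)$ yields an isomorphism of graded bundles $F_k \cong F_{k-1} \times_M E_k$. Combining this with the inductive hypothesis $F_{k-1} \cong E_1 \times_M \cdots \times_M E_{k-1}$ produces the desired split decomposition $F_k \cong E_1 \times_M \cdots \times_M E_k$, and the non-canonicity records the free choice of trivialising atlas and partition of unity. The step I anticipate to be most delicate is verifying that $\sigma$ is genuinely equivariant under $\rmh$ rather than merely smooth; this reduces to the observation that functions pulled back from $M = F_0$ have weight zero with respect to $\rmh$ and therefore pass through the affine combination unchanged, while each local $\sigma_\alpha$ is equivariant by construction.
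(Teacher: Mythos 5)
The paper does not actually prove this statement---it is imported verbatim from \cite{Bruce:2016} as Theorem \ref{theorem:splitting}---so there is no internal proof to compare against, only the citation. Your argument (induction on the degree, realising $\tau^k_{k-1}:F_k\to F_{k-1}$ as an affine bundle modelled on $(\tau_{k-1})^*E_k$, producing a graded global section as an affine partition-of-unity average of the local zero sections, and then splitting off $E_k$ by affine subtraction) is correct, including the key point that the differences $\sigma_\alpha-\sigma_{\alpha_0}$ are homogeneous of weight $k$ because the polynomial tails in \eqref{eqn:translawsGr} are; this is essentially the standard proof of the cited result (compare also \cite{Bonavolonta:2013} for the $\N$-manifold analogue).
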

The above theorem says that we can always find an isomorphism in the category of graded bundles, i.e., a \emph{splitting}:
$$\varphi : F_k \longrightarrow F_k^{\:\textnormal{split}}.$$
However, a splitting is never unique and rarely is it canonical. Each $E_i$ we refer to as a `building vector bundle' for $F_k$. 
\begin{remark}
An identical statement can be made for $\N$-manifolds. While this has been `folklore' for a while, the first proof  to appear in the literature that we are aware of is that of Bonavolont\`{a} \&  Poncin  \cite{Bonavolonta:2013}.
\end{remark}
\begin{example}
The $k$-th order tangent bundle $\sT^k M$ is \emph{non-canonically} isomorphic   to  
$$\sT^{\times_k}M := \underbrace{\sT M \times_M \sT M \times_M \cdots \times_M \sT M}_{\textnormal{k times}}\,,$$
equipped with its natural regular homogeneity structure, i.e., we assign a weight of $i$ to  fibre coordinates of the $i^{\textnormal{th}}$ factor in the fibre product.  The specific case of 
$$\sT^2 M \xrightarrow{\sim} \sT M \times_M \sT M\,,$$
is well-known and corresponds to the specification of an affine connection on $M$. Naturally, without any further structure, there is no way to single out a particular affine connection as being in anyway privileged.
\end{example}
\begin{example}
Consider a vector bundle $\pi: E \rightarrow M$. The associated vertical bundle $\sV E \subset \sT E$, which is naturally considered as a graded bundle of degree $2$,  is \emph{canonically} isomorphic to $E \times_M E$. This example is well-known and is easily justified via adapted local coordinates. The point is that the isomorphism does not require any additional structure to be specified, this is in stark contrast to the previous example.
\end{example}
The notion of a \emph{double vector bundle}   is conceptually  clear  in terms of mutually  commuting regular homogeneity structures (see  \cite{Grabowski:2009, Grabowski:2012}). The original notion as given by Pradines \cite{Pradines:1974} (also see \cite{Koneczna:1999,Mackenzie:1992}) is much more complicated involving various compatibility conditions between the two vector bundle structures. We will discuss double vector bundles in more detail in Section \ref{sec:MultiWeightedCon}.  Relaxing the condition that the homogeneity structures be regular leads to the notion of a \emph{double graded bundle}.  In this paper we will encounter double graded bundles for which one of the homogeneity structures is regular, these particular double graded bundles were referred to as \emph{graded--linear bundles} in \cite{Bruce:2016}. 
\begin{remark}
We will \emph{not} consider more general graded geometries that require local coordinates of both positive and negative degrees. From our perspective, such geometries exhibit  pathological behaviour. Specifically, for general $\mathbb Z$-graded manifolds, the homogeneous functions are power series in the local coordinates and not just polynomials; there is no canonical bundle structure over the underlying base manifold; and it is not known if there is an analogue of the Batchelor--Gaw\c{e}dzki theorem.  For a review of $\mathbb Z$-graded geometry, the reader may consult Fairon \cite{Fairon:2017}.
\end{remark}

\subsection{Lie algebroids as Q-manifolds}
Our philosophy is that a Lie algebroid \emph{is} a Q-manifold equipped with a regular homogeneity structure that is compatible with the Q-structure. Let $\pi : A \to M$ be a  vector bundle. In particular, $A$ comes equipped with a regular homogeneity structure 
$$\rml : \R \times A  \rightarrow A, $$
 and we can make the identification $\pi = \rml_0$. This means that  we can assign a weight of zero to the base coordinates and a weight of one to the fibre coordinates. One can then apply the parity reversion functor and obtain  $\Pi A$, which naturally comes equipped with a regular homogeneity structure inherited from that on $A$.  We will denote the homogeneity structure on the parity reversed vector bundle by $\rml$ also, this should cause no confusion. Following Va\v{\i}ntrob \cite{Vaintrob:1997} it is well known that a Lie algebroid structure on a vector bundle $A$  is equivalent to a homological vector field of weight one on  $\Pi A$. 
\begin{definition}
A \emph{Lie algebroid} is a Q-manifold  equipped with a regular homogeneity structure $\rml : \R \times \Pi A \rightarrow \Pi A$, such that the  homological vector field $\rmd_A \in \Vect(\Pi A)$, is of weight one. That is
$$\rml_{t}^{*}\circ \rmd_{A} = t \: \rmd_{A} \circ \rml_{t}^{*},$$
for all $t \in \R$.
\end{definition}
By convention, we will denote a Lie algebroid by $(\Pi A, \rmd_{A}, \rml)$, or simply $(\Pi A, \rmd_{A})$. In natural local coordinates $(x^{a}, \zx^{i})$, the  homological vector field is given by
$$\rmd_{A} = \zx^{i}Q_{i}^{a}(x) \frac{\partial}{\partial x^{a}} + \frac{1}{2!} \zx^{i}\zx^{j}Q_{ji}^{k}(x)\frac{\partial}{\partial \zx^{k}}.$$
In these local coordinates, we have the  \emph{Lie algebroid structure equations}
\begin{align*}
& Q_{ji}^{k}Q_{k}^{b} = Q_{j}^{a}\frac{\partial Q_{i}^{b}}{\partial x^{a}} {-}  Q_{i}^{a} \frac{\partial Q_{j}^{b}}{\partial x^{a}},
&& \sum_{\textnormal{cyclic}} \left(Q_{i}^{a}\frac{\partial Q_{jk}^{l}}{\partial x^{a}}  +  Q_{ij}^{m}Q_{mk}^{l}\right) =0.
\end{align*}

The notion of a \emph{morphism of Lie algebroids} is clear. Suppose we have two Lie algebroids $(\Pi A, \rmd_A , \rml)$ and  $(\Pi A^\prime, \rmd_{A^\prime} , \rml^\prime)$,  not necessarily over the same base manifold, then a morphism between them is a morphism of supermanifolds
$$\phi:  \Pi A \longrightarrow \Pi A^\prime,$$
that respects the additional structures, i.e.,
\begin{align*}
&  \phi \circ \rml_t = \rml_t^\prime \circ \phi,
&& \rmd_A \circ \phi^* = \phi^* \circ  \rmd_{A^\prime}.
\end{align*}
Evidently, we can compose morphisms of Lie algebroids and thus we obtain the \emph{category of Lie algebroids}.\par 

The derived bracket formalism of Kosmann-Schwarzbach (see \cite{Kosmann-Schwarzbach:1996,Kosmann-Schwarzbach:2003}) allows us to translate between the definition of a Lie algebroid in terms of a Q-manifold and that in terms of a bracket and anchor.  Note that we have an odd linear mapping $\Sec(A) \longrightarrow \Vect(\Pi A)$
given by
$$u = u^{i}(x)e_{i} \leftrightsquigarrow \iota_{u} :=  u^{i}(x)\frac{\partial}{\partial \zx^i} $$ for any section $u \in \Sec(A)$. Clearly, we have an isomorphism of the spaces taking into account the shift in Grassmann parity. The \emph{Lie algebroid bracket} is defined as
$$\iota_{[u,v]_A} = [[\rmd_A,\iota_{u}], \iota_{v}],$$
for any sections $u$ and $v   \in \Sec(A)$. The \emph{anchor map} $\rho: \Sec(A) \rightarrow \Vect(M)$ is defined as
$$\rho_{u}(f) :=  [[\rmd_A,\iota_{u}],f],$$
for any function $f \in C^{\infty}(M)$. Explicitly in a chosen local basis of sections of $A$ we have
\begin{align*}
& [u,v]_A = \left (u^i Q_i^a \frac{\partial v^k}{\partial x^a} {-}  v^i Q_i^a \frac{\partial u^k}{\partial x^a}   {-} u^i v^j Q_{ji}^k\right)e_k,
&& \rho_u(f) = u^i Q_i^a\frac{\partial f}{\partial x^a}.
\end{align*}
 It is well known that we have the following properties  of the bracket and anchor
\begin{enumerate}
\item $[u,v]_A = {-} [v,u]_A$;
\item $[u, fv]_A = \rho_{u}(f) v +   f [u,v]_A$;
\item $[u,[v,w]_A]_A = [[u,v]_A,w]_A + [v, [u,w]_A]_A$;
\item $[\rho_{u}, \rho_{v}] - \rho_{[u,v]_A} =0$.
\end{enumerate}
\begin{example}
The antitangent bundle $\Pi \sT M$ comes equipped with the de Rham differential. In natural coordinates $(x^{a}, \rmd x^{b})$ the de Rham differential is given by
$$\rmd =  \rmd x^{a} \frac{\partial}{\partial x^{a}}.$$ 
The associated derived bracket is just the standard Lie bracket on vector fields on $M$ and the anchor is the identity map.
\end{example}
\begin{example}
A super-vector space $U = \Pi \mathfrak{g}$ considered as a linear supermanifold, equipped with a homological vector field of weight one is equivalent to a Lie algebra. This is really no more than a minor reformulation of the Chevalley--Eilenberg algebra of a Lie algebra.  In natural adapted coordinates $\zx^{i}$ the differential is given by
$$\rmd_\mathfrak{g} =\frac{1}{2!} \zx^{i}\zx^{j}Q_{ji}^{k}\frac{\partial}{\partial \zx^{k}}.$$
The associated derived bracket is the Lie bracket on $\mathfrak{g}$.
\end{example}
We can also understand  the anchor as a morphism of graded super bundles
$$\rho^\Pi : \Pi A \longrightarrow \Pi \sT M,$$
which is given in local coordinates as
$$\big(x^a, \rmd x^b \big)\circ \rho^\Pi  :=  \big(x^a , \zx^iQ_i^a(x) \big).$$
General Lie algebroids can be viewed as a mixture of the two extreme examples presented above, for more details the reader can consult Mackenzie \cite{Mackenzie:2005}. A rather non-comprehensive list of examples includes  integrable distributions in a tangent bundle, the cotangent bundle of a Poisson manifold and bundles of Lie algebras.

\section{Connections adapted to graded bundles}\label{sec:WeightedCon}
\subsection{Weighted  $A$-connections}
Consider a Lie algebroid $(\Pi A, \rmd_{A}, \rml)$ and a graded bundle $(F_{k} , \rmh)$, both over the same base  $F_0 =: M$. We can then construct the pullback bundle $\tau^* \Pi A \simeq \Pi A\times_{M} F_k$, ($\tau :=\rmh_0$), together with  the following  commutative diagram
\begin{center}
\leavevmode
\begin{xy}
(0,20)*+{\tau^* \Pi A}="a"; (25,20)*+{\Pi A}="b";
(0,0)*+{F_{k}}="c"; (25,0)*+{M}="d";%
{\ar "a";"b"}?*!/_3mm/{\rmp_{\Pi A}};%
{\ar "a";"c"}?*!/^3mm/{\rmp_F};%
{\ar "b";"d"}?*!/_3mm/{\pi};%
{\ar "c";"d"}?*!/^3mm/{\tau};%
\end{xy}
\end{center}
We consider $\tau^* \Pi A$ as a bi-graded super bundle with the ordering of the homogeneity structures being $(\rmh, \rml)$. We can thus employ local coordinates
$$(\underbrace{x^{a}}_{(0,0)}, ~ \underbrace{\zx^{i}}_{(0,1)}, ~ \underbrace{y_{w}^{I}}_{(w,0)}),$$
where we have indicated the assignment of the bi-weight.
\begin{definition}\label{def:weighted connection}
With the above notation, a \emph{weighted $A$-connection} on a graded bundle $F_k$ is an odd vector field $\nabla \in \Vect\big(\tau^*\Pi A \big )$ of  bi-weight $(0,1)$ that projects to $\rmd_{A} \in \Vect(\Pi A)$. If the Lie algebroid is the tangent bundle, i.e, $A = \sT M$, then we simply speak of a \emph{weighted connection}.
\end{definition}
\begin{remark}
The similarity with Q-bundles, understood as fibre bundles in the category of Q-manifolds, following Kotov \& Strobl \cite{Kotov:2015} is clear. We are considering very particular bundle structures and are weakening the homological condition.
\end{remark}
Any weighted $A$-connection on $F_{k}$ must be of the local form
$$\nabla = \zx^{i}Q_{i}^{a}(x)\frac{\partial}{\partial x^{a}} + \frac{1}{2!}\zx^{i}\zx^{j}Q_{ji}^{k}(x)\frac{\partial}{\partial \zx^{k}} + \zx^{i}\Gamma_{i}^{I}[w](x,y)\frac{\partial}{\partial y^{I}_{w}}. $$
From here on we will use the notation $\Gamma_{i}^{I}[w]$ and similar to denote the homogeneous part of degree $w$ of an expression.  We will refer to the local components of a weighted $A$-connection as  \emph{Christoffel symbols}.\par 
The admissible changes of fibre coordinates on  $\tau^*\Pi A$ are of the form (see \eqref{eqn:translawsGr} )
\begin{align*}
&y_{w}^{I'} = T^{I'}[w](x,y), && \zx^{i'} = \zx^{j}T_{j}^{\:\:\: i'}(x). & 
\end{align*}
These changes of coordinates induce the transformation law for the Christoffel symbols
\begin{equation}\label{eqn:ChangeChristoffel}
 \Gamma^{I'}_{i'}[w] = T_{i'}^{\:\: j}\left(Q_{j}^{a}\frac{\partial T^{I'}[w]}{\partial x^{a}} + \Gamma_{j}^{J}[w_{0}]\frac{\partial T^{I'}[w]}{\partial Y_{w_{0}}^{J}} \right),
 \end{equation}
which  clearly respects the weight.
\begin{example}
Consider the trivial graded bundle $F_k := M \times \R^\dd$ (see Example (\ref{e1})) and any Lie algebroid $(\Pi A, \rmd_A)$ over $M$. In this case $\tau^* \Pi A \simeq \Pi A \times \R^\dd$. We can then have a canonical choice for a weighted $A$-connection, namely, we set $\nabla := \rmd_A$.  As with the classical case of connections on vector bundles, this choice is \emph{only} possible on trivial graded bundles.
\end{example}
\begin{example}
A linear $A$-connection on  a vector bundle $\tau : E \longrightarrow M$ is of the local form
$$ \nabla = \zx^{i}Q_{i}^{a}(x)\frac{\partial}{\partial x^{a}} + \frac{1}{2!}\zx^{i}\zx^{j}Q_{ji}^{k}(x)\frac{\partial}{\partial \zx^{k}} + y^\alpha \zx^{i}(\Gamma_i)_\alpha^{\:\: \beta}(x)\frac{\partial}{\partial y^{\beta}}\,, $$
where we have employed homogeneous local coordinates $(x^a, y^\alpha)$ of weight $0$ and $1$, respectively,  on $E$.
\end{example}
\begin{proposition}
Fix a Lie algebroid $\big( \Pi A, \rmd_A \big)$ and a graded bundle $\big( F_k, \rmh \big)$, both over a smooth manifold $M$. Let us assume that the set of weighted $A$-connections is non-empty. Then,  the set of weighted $A$-connections is an affine space modelled on vertical vector fields with respect to $\rmp_F : \tau^* \Pi A \rightarrow F_k$ of weight $(0,1)$.
\end{proposition}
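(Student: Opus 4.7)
The plan is the standard affine-space argument. By the non-emptiness hypothesis, I would fix one weighted $A$-connection $\nabla_0 \in \Vect(\tau^*\Pi A)$. For any other weighted $A$-connection $\nabla$, form the difference $X := \nabla - \nabla_0$. Since both $\nabla$ and $\nabla_0$ are odd, of bi-weight $(0,1)$, and project under $\rmp_{\Pi A}$ to the same homological vector field $\rmd_A \in \Vect(\Pi A)$, the difference $X$ is automatically odd, of bi-weight $(0,1)$, and satisfies $(\rmp_{\Pi A})_* X = 0$, i.e., $X$ lies in $\ker d\rmp_{\Pi A}$. In the adapted coordinates introduced before Definition \ref{def:weighted connection} this is just the statement that $X = \zx^i \big(\Gamma_i - \Gamma_{0,i}\big)^I[w](x,y)\,\partial/\partial y_w^I$, consistent with the transformation law \eqref{eqn:ChangeChristoffel}: the inhomogeneous piece $Q_j^a\,\partial T^{I'}[w]/\partial x^a$ cancels in any difference of Christoffel symbols, so the coefficients transform tensorially and define a globally well-behaved vector field on $\tau^*\Pi A$.

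Conversely, for any odd vector field $X$ on $\tau^*\Pi A$ of bi-weight $(0,1)$ lying in $\ker d\rmp_{\Pi A}$, I would verify directly from Definition \ref{def:weighted connection} that $\nabla_0 + X$ is again a weighted $A$-connection: it is odd, of bi-weight $(0,1)$, and projects to $\rmd_A$ under $\rmp_{\Pi A}$ (since projection is additive and $X$ projects to zero). Therefore the prescription $\nabla \mapsto \nabla - \nabla_0$ is a bijection between the set of weighted $A$-connections and the $\R$-vector space of odd bi-weight $(0,1)$ vector fields on $\tau^*\Pi A$ vertical over $\Pi A$, which endows the former with an affine-space structure whose underlying linear space is manifestly independent of the reference connection $\nabla_0$.

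There is no genuine obstacle here; the proof is entirely formal and parallels the classical fact that connections on a fibre bundle form an affine space. The points worth highlighting are that each of the three conditions in Definition \ref{def:weighted connection} (parity, bi-weight, and $\rmp_{\Pi A}$-projectability onto $\rmd_A$) is linear in $\nabla$, so the set of such vector fields is closed under formation of differences and under addition of elements of the model space; and that one should identify the model space by reading off the local shape of $\nabla - \nabla_0$, which has only $\partial/\partial y_w^I$-components and is therefore vertical over $\Pi A$ rather than over $F_k$.
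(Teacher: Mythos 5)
Your argument is correct and is essentially the paper's own proof: form the local difference of two weighted $A$-connections, observe that the inhomogeneous term $T_{i'}^{\ j}Q_j^a\,\partial T^{I'}[w]/\partial x^a$ in \eqref{eqn:ChangeChristoffel} cancels so the difference is a well-defined odd vector field of bi-weight $(0,1)$ with only $\partial/\partial y_w^I$-components, and (the converse half, which the paper leaves implicit) adding such a vector field to a connection again yields a connection. Your identification of the model space as vector fields in $\ker T\rmp_{\Pi A}$ (vertical over $\Pi A$) is the accurate one: read literally, ``vertical with respect to $\rmp_F$'' would mean the $\partial/\partial\zx^i$-directions, which is not what the local form of $\nabla-\nabla'$ shows, so you have correctly pinned down a looseness in the proposition's wording rather than introduced an error.
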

\begin{proof}
The difference of two weighted $A$-connections is locally given by
$$\nabla {-}\nabla^\prime = \zx^i \left(\Gamma_i^I[w](x,y)  {- } \Gamma_i^{\prime I}[w](x,y)  \right) \frac{\partial}{\partial y_w^I}\,.$$
The transformation laws \eqref{eqn:ChangeChristoffel} show that we do indeed have a vector field in this way and inspection shows that it is vertical and weight $(0,1)$.
\end{proof}
We will denote the affine space of all weighted $A$-connections on a given graded bundle as $\mathcal{C}(F, \Pi A)$,  furthermore, we will prove this set is non-empty (see Theorem \ref{thm:existence connections}).
\begin{example}\label{exm:degree 2 christoffel}
To illustrate explicitly  the general situation let us consider a  graded bundle of degree $2$. We can employ local coordinates $(x^{a}, y^{\alpha}, z^{\mu})$ of weight $0,1$ and $2$ respectively.  The admissible changes of coordinates are
\begin{align*}
& x^{a'} = x^{a'}(x), && y^{\alpha'} = y^{\beta}T_{\beta}^{\:\:\: \alpha'}(x), && z^{\mu'} = z^{\nu}T_{\nu}^{\:\:\: \mu '}(x) + \frac{1}{2!}y^{\alpha}y^{\beta}T_{\beta \alpha}^{\:\:\:\:\: \mu'}(x).&
\end{align*}
 A weighted $A$-connection is then specified (locally) by the Christoffel symbols $\Gamma_{i}^{\alpha}[1]\big(x,y \big)$  and  $\Gamma_{i}^{\mu}[2]\big(x,y,z \big)$, which are polynomial in the non-zero weight coordinates. The transformation laws for the Christoffel symbols are
\begin{align*}
& \Gamma^{\alpha'}_{i'}[1] = T_{i'}^{\:\:\: j}\Gamma_{j}^{\beta}[1]T_{\beta}^{\:\:\: \alpha'} + y^{\beta}T_{i'}^{\:\:\: j}Q_{j}^{a}\frac{\partial T_{\beta}^{\:\:\: \alpha'}}{\partial x^{a}},\\
& \Gamma^{\mu'}_{i'}[2] = T_{i'}^{\:\:\: j}\Gamma_{j}^{\nu}[2]T_{\nu}^{\:\:\: \mu'} + y^{\beta}T_{i'}^{\:\:\:j}\Gamma_{j}^{\alpha}[1]T_{\alpha \beta}^{\:\:\:\:\: \mu'} + z^{\nu}T_{i'}^{\:\:\: j}Q_{j}^{a}\frac{\partial T_{\nu}^{\:\:\: \mu'}}{\partial x^{a}} + \frac{1}{2!}y^{\beta}y^{\alpha}T_{i'}^{\:\:\: j}Q_{j}^{a} \frac{\partial T_{\alpha \beta}^{\:\:\:\:\: \mu'}}{\partial x^{a}}.
\end{align*} 
\end{example}
\begin{definition}\label{def:FlatConnection}
A weighted $A$-connection is said to be a \emph{flat weighted $A$-connection} if and only if it is `homological', i.e.,
$$\nabla^2 = \frac{1}{2}[\nabla, \nabla] =0.$$
\end{definition}
\begin{remark}
Any flat weighted $A$-connection is equivalent to a \emph{weighted Lie algebroid} structure on $A \times_M F_k$, we certain properties (see \cite{Bruce:2014a, Bruce:2016}). That is, we have a classical  Lie algebroid  structure, i.e., bracket and anchor,  on $A \times_M F_k  \rightarrow F_k$ such that the bracket is of degree $-k$. In this case, we have a kind of action Lie algebroid, see Subsection \ref{subsec:Action}.
\end{remark}
In local coordinates, the \emph{curvature} of any weighted $A$-connection is easily shown to be given by 
\begin{align}\label{eqn:Curvature}
\nabla^2  & = {-}\frac{1}{2} \zx^i \zx^j \left( Q_j^a \frac{\partial \Gamma^I_i[w]}{\partial x^a}  \: {-}\:   Q_i^a \frac{\partial \Gamma^I_j[w]}{\partial x^a}  \: {-}\: Q_{ij}^k \Gamma_k^I[w] \right .\\
\nonumber  & + \left .  \Gamma_j^J[w_0] \frac{\partial \Gamma_i^I[w]}{\partial y^J_{w_0}} \: {-} \:   \Gamma_i^J[w_0] \frac{\partial \Gamma_j^I[w]}{\partial y^J_{w_0}} \right) \frac{\partial}{\partial y_w^I}
\end{align}
\noindent \textbf{Observations.} Given any graded bundle and Lie algebroid both over the same base, we have a series of affine bundle structures
$$\Pi A \times_M F_k  \xrightarrow{(\Id_{\Pi A} , \tau^k_l)} \Pi A \times_M F_l,  $$
where $l<k$ naturally  inherited from the graded structure of $F_k$ (see \eqref{eqn:fibrations}). Secondly, a weighted $A$-connection on $F_k$ is projectable to a weighted $A$-connection on $F_l$. To see this, it is clear that the Christoffel symbol $\Gamma_i^I[w_0]$ cannot depend on coordinates of weight greater than $w_0$. Thirdly, due to the properties of a pushforward, a flat weighted $A$-connection on $F_k$ is projectable to a flat weighted $A$-connection on $F_l$. \par 
From these observations, we are led to the following.
\begin{proposition}\label{prop:vector bundle connections}
Any weighted $A$-connection on a graded bundle $F_k$ gives rise to a series  of weighted $A$-connections on $\{F_l \}_{1 \leq l \leq k}$ induced  by the affine fibrations $\tau^k_l : F_k \rightarrow F_l$.  In particular, there is an induced  linear $A$-connection on the vector bundle $\tau^1:F_{1} \rightarrow M$.  Moreover, if the weighted $A$-connection is flat, i.e., $\nabla^{2} =0$, then the induced linear $A$-connection is also flat, and so we have a Lie algebroid representation.
\end{proposition}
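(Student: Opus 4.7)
The plan is to exploit the bi-weight $(0,1)$ condition on $\nabla$ to establish that each $\nabla$ is tautologically projectable along the affine fibrations, then transport flatness along these projections.

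First I would sharpen the author's \textbf{Observations}. Since $\nabla$ has bi-weight $(0,1)$ and each $\partial/\partial y_{w_0}^I$ carries bi-weight $(-w_0,0)$ while $\zx^i$ carries bi-weight $(0,1)$, homogeneity forces the Christoffel symbol $\Gamma_i^I[w_0](x,y)$ to be a polynomial of total weight $w_0$ in the coordinates $\{y_w^J\}$. In particular, $\Gamma_i^I[w_0]$ depends only on coordinates $y_w^J$ with $w \le w_0$. This is the essential local fact.

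Next I would construct the induced connections. The map $\pi_l := (\Id_{\Pi A}, \tau^k_l) : \Pi A \times_M F_k \to \Pi A \times_M F_l$ is a morphism of bi-graded bundles, and its pullback $\pi_l^\*$ is the inclusion of the subalgebra generated by $\{x^a, \zx^i\} \cup \{y_w^I : w \le l\}$. The preceding observation shows that $\nabla$ preserves this subalgebra: for any generator $f$ of weight $\le l$, the expression $\nabla f$ is again polynomial in coordinates of weight $\le l$. Hence $\nabla$ is $\pi_l$-projectable; denote the pushforward by $\nabla^{(l)} \in \Vect(\Pi A \times_M F_l)$. Being obtained by restriction, $\nabla^{(l)}$ inherits bi-weight $(0,1)$ and still projects to $\rmd_A$ on $\Pi A$, so it is a weighted $A$-connection on $F_l$ in the sense of Definition \ref{def:weighted connection}.

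For $l=1$, the graded bundle $F_1 \to M$ is a vector bundle, and the Christoffel symbol $\Gamma_i^\alpha[1](x,y)$, being a weight-one polynomial in $\{y^\beta\}$, is necessarily linear in $y^\beta$, i.e.\ $\Gamma_i^\alpha[1] = y^\beta (\Gamma_i)_\beta{}^\alpha(x)$. Comparing with the local expression in the second example after Definition \ref{def:weighted connection}, $\nabla^{(1)}$ is exactly a linear $A$-connection on $\tau^1 : F_1 \to M$.

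Finally, for the flatness statement: since $\nabla$ and its $\pi_l$-projection $\nabla^{(l)}$ are $\pi_l$-related, and the Lie superbracket of projectable vector fields pushes forward to the bracket of the pushforwards, we have $(\nabla^{(l)})^2 = \tfrac{1}{2}[\nabla^{(l)}, \nabla^{(l)}]$ obtained as the $\pi_l$-projection of $\nabla^2$. Thus $\nabla^2 = 0$ implies $(\nabla^{(l)})^2 = 0$ for every $l$, in particular $(\nabla^{(1)})^2 = 0$. A flat linear $A$-connection on a vector bundle is precisely a representation of the Lie algebroid $A$ on $F_1$, via the standard derived-bracket dictionary recalled in Subsection 2.3. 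The main obstacle is purely bookkeeping, ensuring the bi-weight constraint really forces $\Gamma_i^I[w_0]$ to be independent of higher-weight coordinates; once this is in hand the projection and the preservation of flatness are formal consequences of $\pi_l$-relatedness.
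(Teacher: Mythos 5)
Your proposal is correct and follows essentially the same route as the paper: the paper's justification is precisely the ``Observations'' paragraph preceding the proposition, which notes that the bi-weight $(0,1)$ condition forces $\Gamma_i^I[w_0]$ to be independent of coordinates of weight greater than $w_0$ (hence $\nabla$ is projectable along $(\Id_{\Pi A},\tau^k_l)$) and that flatness is preserved by the pushforward. Your write-up simply makes the homogeneity bookkeeping and the $\pi_l$-relatedness argument explicit, which is exactly what the paper leaves implicit.
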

We now change our perspective slightly and think of weighted $A$-connections in terms of morphisms of certain bi-graded bundles: this is closer to the original definitions of linear $A$-connections as given by Fernandes \cite{Fernandes:2002},  the slightly more general notion of $\rho$-connections on anchored bundles as given by Cantrijn \& B.~Langerock \cite{Cantrijn:2003} (also see Popescu \& Popescu \cite{Popescu:2001}), and the relative tangent spaces of Popescu \cite{Popescu:1992}.
\begin{proposition}\label{prop:weighted connection morphism}
A weighted $A$-connection on a graded bundle $F_{k}$ is equivalent to a morphism in the category of bi-graded (super) bundles 
$$h_\nabla :  \tau^* \Pi A \longrightarrow  \Pi \sT F_{k},$$
such that the following diagram is commutative
\begin{center}
\leavevmode
\begin{xy}
(0,20)*+{ \tau^*\Pi A}="a"; (30,20)*+{\Pi \sT  F_{k}}="b";
(0,0)*+{\Pi A}="c"; (30,0)*+{\Pi \sT M}="d";%
{\ar "a";"b"}?*!/_3mm/{h_\nabla};%
{\ar "a";"c"}?*!/^4mm/{\rmp_{\Pi A}};%
{\ar "b";"d"}?*!/_6mm/{(\sT \tau)^\Pi};%
{\ar "c";"d"}?*!/^3mm/{\rho^\Pi};%
\end{xy}
\end{center}
\end{proposition}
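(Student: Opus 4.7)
The plan is to exploit the standard dictionary between odd vector fields on a supermanifold $\cM$ and even sections of the antitangent projection $\Pi\sT\cM\to\cM$, and then push such a section forward along $\rmp_F$. Concretely, for a weighted $A$-connection $\nabla\in\Vect(\tau^*\Pi A)$, let $s_\nabla:\tau^*\Pi A\to\Pi\sT(\tau^*\Pi A)$ denote the corresponding even section, and set
\begin{equation*}
h_\nabla := (\sT\rmp_F)^\Pi \circ s_\nabla : \tau^*\Pi A \longrightarrow \Pi\sT F_k.
\end{equation*}
Written in the local coordinates $(x^a,\zx^i,y_w^I)$ on $\tau^*\Pi A$ and the induced coordinates $(x^a,y_w^I,\rmd x^a,\rmd y_w^I)$ on $\Pi\sT F_k$, this morphism reads
\begin{equation*}
h_\nabla^*(x^a)=x^a,\quad h_\nabla^*(y_w^I)=y_w^I,\quad h_\nabla^*(\rmd x^a)=\zx^i Q_i^a(x),\quad h_\nabla^*(\rmd y_w^I)=\zx^i\Gamma_i^I[w](x,y).
\end{equation*}

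Both conditions demanded of $h_\nabla$ should then follow essentially by inspection of this formula. The bi-weight-$(0,1)$ requirement on $\nabla$ is equivalent to the Christoffel symbols $\Gamma_i^I[w]$ being polynomial of $y$-weight exactly $w$, and this is precisely what is needed for $h_\nabla$ to preserve both homogeneity structures on $\Pi\sT F_k$ (the lift of $\rmh$ to $\Pi\sT F_k$ in the first slot, the linear structure $\Pi\sT F_k\to F_k$ in the second). Commutativity of the outer square collapses to the identity $h_\nabla^*(\rmd x^a)=\zx^i Q_i^a(x)$, which is exactly the requirement that $\nabla$ project to $\rmd_A$.

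For the inverse assignment, take any morphism $h:\tau^*\Pi A\to\Pi\sT F_k$ of bi-graded super bundles (both sides viewed as bundles over $F_k$ via $\rmp_F$ and the tangent projection, so $h$ covers $\mathrm{id}_{F_k}$) making the diagram commute. Bi-weight counting forces
\begin{equation*}
h^*(x^a)=x^a,\quad h^*(y_w^I)=y_w^I,\quad h^*(\rmd x^a)=\zx^i F_i^a(x),\quad h^*(\rmd y_w^I)=\zx^i G_i^I[w](x,y),
\end{equation*}
with $G_i^I[w]$ polynomial of weight $w$ in the $y$'s, and the commutative square pins down $F_i^a=Q_i^a$. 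One then defines $\nabla$ by declaring its $\partial/\partial x^a$ and $\partial/\partial\zx^k$ components to coincide with those of $\rmd_A$ (automatically ensuring projectability), and its $\partial/\partial y_w^I$ components to be $\zx^i G_i^I[w]$. Since $h$ is a global morphism of supermanifolds, the $G_i^I[w]$ automatically satisfy the cocycle \eqref{eqn:ChangeChristoffel}, so they are genuine Christoffel symbols and $\nabla$ is well defined. The two assignments are mutually inverse by construction. The only step requiring any care is the bookkeeping check that the bi-graded morphism condition, combined with commutativity of the diagram, extracts exactly the same local data as the Christoffel symbols $\Gamma_i^I[w]$ subject to \eqref{eqn:ChangeChristoffel} --- neither more nor less.
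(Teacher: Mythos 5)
Your proof is correct and follows essentially the same route as the paper's: both directions reduce to reading off the forced local form of $h_\nabla$, identifying its components with the Christoffel symbols, and checking that the bi-graded morphism condition and the commuting square encode exactly the weight condition, the transformation law \eqref{eqn:ChangeChristoffel}, and projectability to $\rmd_A$. Your invariant packaging of the forward direction as $(\sT\rmp_F)^\Pi\circ s_\nabla$, and your explicit remark that $h$ must cover $\mathrm{id}_{F_k}$, are welcome refinements of what the paper leaves implicit, but not a different argument.
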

\begin{proof}
In local coordinates, the morphism $h_\nabla$ must be of the form
$$\big(x^{a}, y_{w}^{I}, \rmd x^{b} , \rmd y^{J}_{w} \big)\circ h_\nabla =  \big(x^{a},~ y^{I}_{w},~ \zx^{i}Q_{i}^{b}(x),~ \zx^{j}\Gamma_{j}^{J}[w](x,y)  \big).$$
 It is then a matter of checking the transformation rules to show that $\Gamma_{j}^{J}[w](x,y)$ are the Christoffel symbols of a weighted $A$-connection. Thus, given a morphism $h_\nabla$ we can construct a weighted connection $\nabla$. The other direction immediately follows as locally any weighted connection is specified by its Christoffel symbols.  
\end{proof}
\begin{example}[Affine connections]  Given an affine connection  on $M$ (so a linear connection on $\sT M$) we can construct the following commutative diagram
\begin{center}
\leavevmode
\begin{xy}
(0,20)*+{ \Pi \sT M \times_M \sT M}="a"; (40,20)*+{\Pi \sT \big( \sT M \big)}="b";
(0,0)*+{\Pi \sT M}="c"; (40,0)*+{\Pi \sT M}="d";%
{\ar "a";"b"}?*!/_3mm/{h_\nabla};%
{\ar "a";"c"}?*!/^6mm/{\rmp_{\Pi \sT M}};%
{\ar "b";"d"}?*!/_6mm/{(\sT \tau)^\Pi};%
{\ar "c";"d"}?*!/^3mm/{\Id_{\Pi \sT M}};%
\end{xy}
\end{center}
The morphism $h_\nabla : \Pi \sT M \times_M \sT M \longrightarrow  \Pi \sT \big( \sT M \big) $ is then described in local coordinates by
$$\big(x^a, \dot{x}^b , \rmd x^c , \rmd \dot{x}^d \big) \circ h_\nabla = \big ( x^a, \dot{x}^b , \rmd x^c ,  \rmd x^b  \dot{x}^a\Gamma_{ab}^d(x)  \big ),$$
where $\Gamma_{ab}^d$ are the Christoffel symbols of the specified affine connection.
\end{example}
\begin{example}[Linear connections]
\label{exm:LinearConnections}
If we consider a vector bundle, so a graded bundle of degree $1$, then we recover the notion of a linear $A$-connection  as given by Fernandes \cite{Fernandes:2002}. The commutative diagram here is 
\begin{center}
\leavevmode
\begin{xy}
(0,20)*+{ \Pi A \times_M E}="a"; (40,20)*+{\Pi \sT E}="b";
(0,0)*+{\Pi A}="c"; (40,0)*+{\Pi \sT M}="d";%
{\ar "a";"b"}?*!/_3mm/{h_\nabla};%
{\ar "a";"c"}?*!/^4mm/{\rmp_{\Pi A}};%
{\ar "b";"d"}?*!/_6mm/{(\sT \tau)^\Pi};%
{\ar "c";"d"}?*!/^3mm/{\rho^\Pi};%
\end{xy}
\end{center}
The morphism $h_\nabla : \Pi A \times_M E \longrightarrow \Pi \sT E$ is locally given by
$$\big ( x^a, y^\alpha, \rmd x^b, \rmd y^\beta  \big)\circ h_\nabla  = \big ( x^a, y^\alpha,  \zx^i Q_i^b(x) , \zx^i y^\alpha \Gamma_{\alpha i}^\beta(x)\big). $$
This should then be compared with \cite[Definition 0.3]{Fernandes:2002} which is precisely the above commutative diagram up to a shift in parity. The reader should also compare this with Vitagliano \cite[Example 6.]{Vitagliano:2016}.
\end{example}

\begin{example}[Homogeneous nonlinear $A$-connections]\label{exm:HomogeneousConnections}
If we consider a higher order tangent bundle $F_{k} := \sT^{k}M$ then a weighted connection is equivalent to a \emph{homogeneous nonlinear connection}, sometimes also referred to as a $k$-\emph{linear connection} (see for example  de Andr\'{e}s, de Le\'{o}n \& Rodrigues \cite{Andres:1989}, Kure\v{s} \cite{Kures:1989} and Morimoto \cite{Morimoto:1970}).  As a slight generalisation, we can consider an arbitrary Lie algebroid over $M$. For simplicity consider the second order tangent bundle $\sT^2M$ and the following commutative diagram
\begin{center}
\leavevmode
\begin{xy}
(0,20)*+{ \Pi A \times_M \sT^2 M}="a"; (40,20)*+{\Pi \sT  \big( \sT^2 M\big)}="b";
(0,0)*+{\Pi A}="c"; (40,0)*+{\Pi \sT M}="d";%
{\ar "a";"b"}?*!/_3mm/{h_\nabla};%
{\ar "a";"c"}?*!/^4mm/{\rmp_{\Pi A}};%
{\ar "b";"d"}?*!/_6mm/{(\sT \tau)^\Pi};%
{\ar "c";"d"}?*!/^3mm/{\rho^\Pi};%
\end{xy}
\end{center}
In homogeneous coordinates $\big( x^a , x^{b, (1)}, x^{c,(2)} \big)$, see Example \ref{exm:highertangent}, any weighted $A$-connection on  $\sT^2 M$ gives rise to the morphism $h_\nabla:\Pi A \times_M \sT^2 M \rightarrow \Pi \sT \big(  \sT^2M\big) $ which is locally given by
\begin{align*}
& \rmd x^a \circ h_\nabla = \zx^i Q_i^a(x),\\
& \rmd x^{b,(1)} \circ h_\nabla =  \zx^i x^{a,(1)}\overset{(1)}{\Gamma}\vphantom{\Gamma}_{ai}^b(x),\\
& \rmd x^{c,(2)} \circ h_\nabla = \zx^i x^{a,(2)}\overset{(2)}{\Gamma}\vphantom{\Gamma}_{ai}^b(x) + \frac{1}{2!} \zx^i x^{a,(1)}x^{b,(1)}\overset{(2)}{\Gamma}\vphantom{\Gamma}_{bai}^c(x).
\end{align*}
For the case of the $k^\textnormal{th}$ order tangent bundle, again in homogeneous coordinates, the $l^\textnormal{th}$ term $(1\leq l \leq k)$ can be written as
$$\rmd x^{a,(l)} \circ h_\nabla = \zx^i x^{b,(l)}\overset{(l)}{\Gamma}\vphantom{\Gamma}_{bi}^a(x) + \sum_{\substack{ m_1 + \cdots m_n = l    \\n>1}} \frac{1}{n!} \zx^i x^{b_1, (m_1)} \cdots x^{b_n, (m_n)}\overset{(l)}{\Gamma}\vphantom{\Gamma}_{b_n \cdots b_1i}^a(x) .$$ 
\end{example}
\noindent \textbf{Statement.} In light of Proposition \ref{prop:weighted connection morphism} and the  above examples, we  can view weighted $A$-connections as being adapted to the structure of the graded bundle in the sense that the morphism $h_\nabla : \tau^* \Pi A \rightarrow \Pi \sT F_k$ is not just a bundle morphism, but  is a morphism of bi-graded super bundles.

\subsection{Gauge transformations}
Before we can prove the existence of weighted $A$-connections for any graded bundle, we will need the notion of gauge equivalence of weighted $A$-connections.
\begin{definition}
Consider an arbitrary Lie algebroid $(\Pi A, \rmd_A)$ and a fixed graded bundle $(F_k, \rmh)$, both over the same smooth manifold $M$.  The \emph{gauge group} $\textnormal{Gau}(F_k, \Pi A)$, consists of automorphisms of the bi-graded bundle  $\tau^* \Pi A$ such that the following diagram is commutative:
\begin{center}
\leavevmode
\begin{xy}
(0,20)*+{ \tau^*\Pi A}="a"; (30,20)*+{\tau^*\Pi A}="b";
(0,0)*+{\Pi A}="c"; (30,0)*+{\Pi A}="d";%
{\ar "a";"b"}?*!/_3mm/{\varphi};%
{\ar "a";"c"}?*!/^4mm/{\rmp_{\Pi A}};%
{\ar "b";"d"}?*!/_6mm/{\rmp_{\Pi A}};%
{\ar "c";"d"}?*!/^3mm/{\Id_{\Pi A}};%
\end{xy}
\end{center}
\end{definition}
Note that this definition of the gauge group does not actually depend on any choice of  Lie algebroid $(\Pi A, \rmd_A)$, and is thus really associated with the graded bundle itself. In fact, from the definition, it is clear that the gauge group is isomorphic to the group of vertical automorphisms of $F_k$. However, it will be more practical to define the gauge group as we have.\par 
The gauge group $\textnormal{Gau}(F_k, \Pi A)$ has a natural left action on the space of weighted $A$-connections (fixing a Lie algebroid), which is defined as 
\begin{align}\label{eqn:GaugeTrans}
& \textnormal{Gau}(F_k, \Pi A)\times \mathcal{C}(F_k, \Pi A)  \longrightarrow \mathcal{C}(F_k, \Pi A) \\
 \nonumber & \big( \varphi , \nabla \big) \mapsto \nabla^\varphi := \varphi^* \circ \nabla \circ (\varphi^{-1})^*.
\end{align}
This action on the space of weighted $A$-connections we refer to as a  \emph{gauge transformation}.
\begin{definition}\label{def:GaugeEquiv}
 Any pair of weighted $A$-connections, $\nabla$ and $\nabla^\prime$, are said to be \emph{gauge equivalent} if there exists a $\varphi \in \textnormal{Gau}(F_k, \Pi A)$ such that 
 $$\nabla^\prime = \nabla^\varphi := \varphi^* \circ \nabla \circ (\varphi^{-1})^*.$$
\end{definition}
\subsection{The existence of weighted connections}
We now proceed to show that the set of weighted $A$-connections for a given graded bundle and a given suitable Lie algebroid is non-empty. 
\begin{lemma}\label{lem:split connections}
Given  any split graded bundle $F_k^{\:\textnormal{split}} := E_{1} \times_{M}E_{2}\times_{M} \cdots  \times_{M} E_{k} $ and any Lie algebroid $(\Pi A, \rmd_{A})$, both over the same base manifold $M$, the corresponding set of weighted $A$-connections is non-empty. 
\end{lemma}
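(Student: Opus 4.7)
The plan is to reduce the statement to the existence of linear $A$-connections on vector bundles, which was established by K\v{r}i\v{z}ka \cite{Krizka:2008}, and to exploit the fact that the product structure of $F_k^{\:\textnormal{split}}$ decouples the weight sectors. First, for each $i \in \{1,\ldots,k\}$, I would invoke K\v{r}i\v{z}ka's result to pick a linear $A$-connection $\nabla^{(i)}$ on the vector bundle $E_i \to M$, with local Christoffel symbols $(\Gamma^{(i)})_{j I_i}^{J_i}(x)$, as in Example \ref{exm:LinearConnections}.

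Next, I would glue these together on $\tau^* \Pi A$, where $\tau:F_k^{\:\textnormal{split}} \to M$. In homogeneous local coordinates $(x^a, \zx^j, y_1^{I_1},\ldots, y_k^{I_k})$ with $\w(y_i^{I_i}) = i$, define
$$\nabla \;:=\; \zx^j Q_j^a(x) \frac{\partial}{\partial x^a} \;+\; \tfrac{1}{2}\, \zx^j \zx^l Q_{lj}^k(x) \frac{\partial}{\partial \zx^k} \;+\; \sum_{i=1}^{k} \zx^j\, y_i^{I_i}\, (\Gamma^{(i)})_{j I_i}^{J_i}(x)\, \frac{\partial}{\partial y_i^{J_i}}.$$
By construction $\nabla$ is odd and projects onto $\rmd_A$ under $\rmp_{\Pi A}$, so the remaining points are (i) that $\nabla$ has bi-weight $(0,1)$, and (ii) that the local formula is globally well-defined.

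Point (i) is immediate by inspection: the $i$-th term in the sum has $\rmh$-weight $i - i = 0$ (from $y_i^{I_i}$ balanced against $\partial/\partial y_i^{J_i}$) and $\rml$-weight $1$ (from the single factor $\zx^j$). For point (ii), the key observation is that the admissible transition functions on $F_k^{\:\textnormal{split}}$ preserve the Whitney sum decomposition and act linearly within each factor $E_i$ (there are no mixing terms of the type $T_{J_n\cdots J_1}^{\:\:\:\:\:\:\:I'}$ in \eqref{eqn:translawsGr}). Consequently, the general transformation law \eqref{eqn:ChangeChristoffel} decouples into $k$ independent equations, each of which is precisely the classical transformation rule for the Christoffel symbols of a linear $A$-connection on $E_i$, and is therefore satisfied by construction.

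I do not anticipate a real obstacle: the content of the lemma is essentially that ``split'' allows one to treat the weight sectors independently, and the nontrivial analytic input — the existence of linear $A$-connections on a single vector bundle, which relies on partitions of unity arguments — is imported wholesale from \cite{Krizka:2008}. The generic (non-split) case is precisely what Theorem \ref{thm:existence connections} will then handle by combining this lemma with the splitting theorem (Theorem \ref{theorem:splitting}) and gauge transformations.
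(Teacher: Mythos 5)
Your proof is correct and follows essentially the same route as the paper: import the existence of linear $A$-connections on each building vector bundle $E_i$ from K\v{r}i\v{z}ka, and assemble them by summing the Christoffel-symbol terms to get a linear (hence weighted, of bi-weight $(0,1)$) $A$-connection on the Whitney sum. Your explicit local formula and the weight/well-definedness checks are exactly what the paper illustrates in the degree-$2$ example following the lemma.
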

\begin{proof}
From K\v{r}i\v{z}ka \cite[Lemma 2.]{Krizka:2008} we know that given any vector bundle and any Lie algebroid both over the same base manifold, the set of  linear $A$-connections on that vector bundle is non-empty. Thus every `building vector bundle' of a split graded bundle can be equipped with a linear $A$-connection for any Lie algebroid over the same base. It is a standard construction to build from linear $A$-connections a linear $A$-connection on  a Whitney sum of vector bundles -- locally this is essentially just taking the sum of the respective Christoffel symbols. Thus, any split graded bundle admits linear $A$-connections, which can be considered as weighted $A$-connections via Example (\ref{exm:LinearConnections}).
\end{proof}
\begin{remark}
K\v{r}i\v{z}ka's proof relies on the existence of a smooth partition of unity subordinate to a chosen vector bundle atlas: the proof is essentially a modification of the standard proof of the existence of a linear connection on a vector bundle using the trivial connection and then extending this via a partition of unity. As we work in the real and smooth  category we have such partitions of unity at our disposal. Alternatively, the existence of $A$-connections reduces to the existence of  linear $\sT M$-connections using the anchor map.
\end{remark}
We will denote some specified weighted $A$-connection on a given split graded bundle  as $\nabla^{\textnormal{split}}$. 
\begin{example}
Consider a split graded bundle of degree $2$, $F_2^\textnormal{split} := E_1 \times_M E_2$. Let us employ local coordinates
$$\big( \underbrace{x^a}_{0}, \: \underbrace{y^\alpha}_{1}, \: \underbrace{z^\mu}_{2} \big),$$
and of course, admissible changes of local coordinates are linear in each fibre coordinate.  Now suppose we are given linear $A$-connections on $E_1$ and $E_2$, which are locally specified by the Christoffel symbols $\Gamma_{\alpha i}^{\beta}(x)$ and $ \Gamma_{\mu i}^{\nu}(x)$, respectively. Then we define 
$$\nabla^{\textnormal{split}}  := \zx^{i}Q_{i}^{a}(x)\frac{\partial}{\partial x^{a}} + \frac{1}{2!}\zx^{i}\zx^{j}Q_{ji}^{k}(x)\frac{\partial}{\partial \zx^{k}} + \zx^{i} y^\alpha \Gamma_{\alpha i}^{\beta}(x) \frac{\partial}{\partial y^\beta} +  \zx^{i} z^\mu \Gamma_{\mu i}^{\nu}(x)\frac{\partial}{\partial z^\nu}, $$ 
which clearly gives a weighed $A$-connection on $F_2^\textnormal{split}$. The local expressions for higher degree split graded bundles are analogous.
\end{example}
\begin{theorem}\label{thm:existence connections}
The set of weighted $A$-connections $\mathcal{C}(F_k, \Pi A)$ is non-empty for any graded bundle $F_k$ and any Lie algebroid $(\Pi A, \rmd_{A})$ both over the same base manifold $M$.
\end{theorem}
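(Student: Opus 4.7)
The strategy is to reduce to the split case and transport the connection along a chosen splitting. Since Lemma~\ref{lem:split connections} already delivers weighted $A$-connections on every split graded bundle, and Theorem~\ref{theorem:splitting} tells us that every graded bundle is (non-canonically) isomorphic to a split one, the two inputs should fit together without further analytic work.

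First I would invoke Theorem~\ref{theorem:splitting} to fix an isomorphism $\varphi : F_k \xrightarrow{\sim} F_k^{\:\textnormal{split}}$ in the category of graded bundles. Because $\varphi$ covers the identity on $M$, pulling back $\Pi A$ along $\tau$ and along $\tau^{\textnormal{split}}$ produces an induced isomorphism of bi-graded super bundles
$$\widetilde{\varphi} := (\Id_{\Pi A}, \varphi) : \tau^* \Pi A \longrightarrow (\tau^{\textnormal{split}})^* \Pi A,$$
which intertwines the two homogeneity structures $\rmh, \rmh^{\textnormal{split}}$, is the identity with respect to the Lie-algebroid homogeneity structure $\rml$, and satisfies $\rmp_{\Pi A} = \rmp_{\Pi A} \circ \widetilde{\varphi}$.

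Next, using Lemma~\ref{lem:split connections}, I would pick a weighted $A$-connection $\nabla^{\textnormal{split}} \in \Vect\big((\tau^{\textnormal{split}})^*\Pi A\big)$ and \emph{define}
$$\nabla := \widetilde{\varphi}^* \circ \nabla^{\textnormal{split}} \circ (\widetilde{\varphi}^{-1})^*,$$
by analogy with the gauge action \eqref{eqn:GaugeTrans}, viewed as a vector field on $\tau^* \Pi A$. The verification that $\nabla$ satisfies Definition~\ref{def:weighted connection} splits into two routine checks. Conjugation by the bi-graded isomorphism $\widetilde{\varphi}$ preserves bi-weight, so $\nabla$ has bi-weight $(0,1)$ because $\nabla^{\textnormal{split}}$ does. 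Moreover, since $\widetilde{\varphi}$ covers $\Id_{\Pi A}$ and $\nabla^{\textnormal{split}}$ projects to $\rmd_A$ under $\rmp_{\Pi A}$, so does $\nabla$.

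There is no serious obstacle beyond this bookkeeping: the existence of a splitting from Theorem~\ref{theorem:splitting} is precisely what substitutes for the absence of a canonical ``trivial'' weighted $A$-connection on a general graded bundle. I would expect the argument to immediately yield the stronger statement that every weighted $A$-connection on $F_k$ is gauge-equivalent (in the sense of Definition~\ref{def:GaugeEquiv}) to the pull-back of a linear $A$-connection on the associated split graded bundle, which presumably is the content of the subsequent Proposition~\ref{prop:gaugeequiv} and Corollary~\ref{coro:gaugeequiv}.
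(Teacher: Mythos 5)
Your proposal is correct and follows essentially the same route as the paper: pick a splitting $\phi : F_k \to F_k^{\:\textnormal{split}}$ from Theorem \ref{theorem:splitting}, take a split connection $\nabla^{\textnormal{split}}$ from Lemma \ref{lem:split connections}, and conjugate to define $\nabla_\phi := \phi^* \circ \nabla^{\textnormal{split}} \circ (\phi^{-1})^*$. You are somewhat more explicit than the paper in spelling out the induced isomorphism $(\Id_{\Pi A},\varphi)$ of bi-graded super bundles and in checking the bi-weight and projectability conditions, but the argument is the same, and your closing expectation about gauge equivalence is indeed the content of Proposition \ref{prop:gaugeequiv} and Corollary \ref{coro:gaugeequiv}.
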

\begin{proof}
From Lemma \ref{lem:split connections} we know that weighted $A$-connections always exist for split graded bundles. Furthermore, we know that any graded bundle is isomorphic, but not canonically to a split graded bundle and that the splitting  acts as the identity on $M$ (Theorem \ref{theorem:splitting}). Let us fix a graded bundle and a  Lie algebroid both over the same base. Let us denote a chosen splitting as $\phi : F_{k} \rightarrow F_k^{\:\textnormal{split}}$. \par 
Given a weighted $A$-connection on the split graded bundle $\nabla^{\textnormal{split}}$, we can use the splitting $\phi$ to construct a weighted $A$-connection on the `unsplit' graded bundle as
$$\nabla_\phi := \phi^* \circ \nabla^{\textnormal{split}} \circ (\phi^{-1})^*.$$
As weighted $A$-connections on split graded bundles always exist (Lemma \ref{lem:split connections}) we conclude that $\mathcal{C}(F_k, \Pi A)$ is non-empty.
\end{proof}
The construction of a weighted $A$-connection from a split weighted connection one depends explicitly on the splitting.  However, we have the following proposition.
\begin{proposition}\label{prop:gaugeequiv}
Let $\psi, \phi : F_k \longrightarrow F_k^{\textnormal{split}}$ be two distinct splittings of the graded bundle $F_k$. Then the weighted $A$-connections $\nabla_\phi$ and $\nabla_\psi$ are gauge equivalent.
\end{proposition}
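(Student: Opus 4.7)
The plan is to produce an explicit gauge transformation $\varphi$ realising the equivalence directly from the two splittings themselves. The key observation is that although $\phi$ and $\psi$ are only maps $F_k\to F_k^{\textnormal{split}}$, both are isomorphisms in the category of graded bundles that act as the identity on the base $M$. Consequently the composite
\begin{equation*}
\widetilde\varphi := \phi^{-1}\circ\psi \: : \: F_k \longrightarrow F_k
\end{equation*}
is an automorphism of $F_k$ in the category of graded bundles projecting to $\Id_M$; in particular it preserves the weight vector field on $F_k$.

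Next, I would lift $\widetilde\varphi$ fibrewise to obtain
\begin{equation*}
\varphi := \Id_{\Pi A}\times_M\widetilde\varphi \: : \: \tau^*\Pi A \longrightarrow \tau^*\Pi A.
\end{equation*}
By construction $\varphi$ acts as the identity on the $\Pi A$--factor and as a graded automorphism on the $F_k$--factor, so it is an automorphism of the bi-graded bundle $\tau^*\Pi A$ covering $\Id_{\Pi A}$. In other words, $\varphi\in\textnormal{Gau}(F_k,\Pi A)$ in the sense of Definition \ref{def:GaugeEquiv}. (This is the step in which one has to be slightly careful: one must observe that the paper's notation $\phi^{*}$ occurring in $\nabla_\phi=\phi^{*}\circ\nabla^{\textnormal{split}}\circ(\phi^{-1})^{*}$ really means the pullback along $\Id_{\Pi A}\times_M\phi$, since the weighted connections live on the respective fibered products with $\Pi A$.)

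Writing $\widetilde\phi := \Id_{\Pi A}\times_M \phi$ and $\widetilde\psi := \Id_{\Pi A}\times_M \psi$, the defining relation $\widetilde\varphi = \phi^{-1}\circ\psi$ gives
\begin{equation*}
\widetilde\phi\circ\varphi = \widetilde\psi.
\end{equation*}
Applying the contravariance of pullback we then compute
\begin{equation*}
\nabla_\phi^{\varphi}
= \varphi^{*}\circ\nabla_\phi\circ(\varphi^{-1})^{*}
= \varphi^{*}\circ\widetilde\phi^{*}\circ\nabla^{\textnormal{split}}\circ(\widetilde\phi^{-1})^{*}\circ(\varphi^{-1})^{*}
= (\widetilde\phi\circ\varphi)^{*}\circ\nabla^{\textnormal{split}}\circ((\widetilde\phi\circ\varphi)^{-1})^{*}
= \widetilde\psi^{*}\circ\nabla^{\textnormal{split}}\circ(\widetilde\psi^{-1})^{*}
= \nabla_\psi,
\end{equation*}
which is exactly the gauge equivalence required by Definition \ref{def:GaugeEquiv}.

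The whole argument is essentially formal once the splittings are lifted correctly to the fibered product $\Pi A\times_M F_k$; there is no analytic obstacle. The only place demanding care is checking that the lift $\varphi$ really lies in the gauge group, i.e.\ that $\widetilde\varphi=\phi^{-1}\circ\psi$ is a morphism of graded bundles preserving the weight on $F_k$ and fixing $M$. This is immediate from Theorem \ref{theorem:splitting}, which guarantees that splittings are isomorphisms in the category of graded bundles and hence composable with graded inverses.
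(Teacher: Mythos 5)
Your proof is correct and follows essentially the same route as the paper: both exhibit the gauge transformation $\varphi=\phi^{-1}\circ\psi$ (lifted to $\tau^*\Pi A$ via $\Id_{\Pi A}\times_M-$) and verify the equivalence by contravariance of pullback. Your version simply spells out the lift to the fibred product and the composition computation that the paper leaves as ``immediately clear''.
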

\begin{proof}
If the two weighted connections are gauge equivalent (see Definition \ref{def:GaugeEquiv}) then we can write
$$\nabla_\phi^\varphi =  \varphi^* \circ \nabla_\psi \circ (\varphi^{-1})^*,$$
and all we need to do is specify $\varphi$. Directly from the definition of the two weighted $A$-connections, it is  immediately clear that 
$$\varphi = \phi^{-1} \circ \psi,$$
is the element of the gauge group that we are looking for. Thus, the two weighted $A$-connections are indeed gauge equivalent.
\end{proof}
Given that gauge transformations are invertible, and so given any weighted $A$-connection we can construct a split weighted $A$-connection we have the following.
\begin{corollary}\label{coro:gaugeequiv}
Fix a Lie algebroid $(\Pi A, \rmd_A)$ and a graded bundle $(F_k, \rmh)$. Modulo gauge equivalence, there is a one-to-one correspondence between weighted $A$-connections on $F_k$ and linear $A$-connections on $F_k^{\textnormal{split}}$.
\end{corollary}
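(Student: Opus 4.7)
The proof is essentially a bookkeeping exercise built on Proposition \ref{prop:gaugeequiv} together with the existence of splittings (Theorem \ref{theorem:splitting}).

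First I would construct a forward map $\Phi$. Given a linear $A$-connection $\nabla^{\textnormal{split}}$ on $F_k^{\textnormal{split}}$, choose any splitting $\phi : F_k \to F_k^{\textnormal{split}}$ (which is available by Theorem \ref{theorem:splitting}) and set
\[
\Phi(\nabla^{\textnormal{split}}) := \bigl[\phi^* \circ \nabla^{\textnormal{split}} \circ (\phi^{-1})^*\bigr] \;\in\; \mathcal{C}(F_k, \Pi A)/\!\sim_{\text{gauge}}.
\]
Proposition \ref{prop:gaugeequiv} guarantees that the right-hand side is independent of the chosen splitting, so $\Phi$ is well-defined.

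Next I would construct the inverse. Fix one splitting $\phi$; because $\phi$ is an isomorphism of graded bundles, the assignment $\nabla \mapsto (\phi^{-1})^* \circ \nabla \circ \phi^*$ is a bijection between weighted $A$-connections on $F_k$ and on $F_k^{\textnormal{split}}$ (gauge transformations being invertible, as noted in the paragraph preceding the corollary). Conjugation by $\phi$ moreover identifies the two gauge groups, so this bijection descends to a bijection of gauge equivalence classes. It remains to show that every gauge equivalence class on $F_k^{\textnormal{split}}$ admits a linear representative and that linear representatives of distinct classes are distinct.

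I would establish existence of a linear representative inductively in weight. The weight-one truncation of any weighted $A$-connection on $F_k^{\textnormal{split}}$ is automatically a linear $A$-connection on the building vector bundle $E_1$. At each higher weight $w$ the non-linear cross-degree contributions to $\Gamma^I_i[w]$ visible in Example \ref{exm:degree 2 christoffel} can be cancelled by a vertical automorphism of the form $y^I_w \mapsto y^I_w + P^I[w](x, y_1,\dots,y_{w-1})$, and this freedom is precisely what the Whitney-sum structure of $F_k^{\textnormal{split}}$ provides. The main obstacle is this inductive cancellation step: its success rests on the fact that the transformation law \eqref{eqn:ChangeChristoffel} restricted to such a vertical automorphism expresses the change in $\Gamma^I_i[w]$ as a first-order algebraic operation on $P^I[w]$, solvable weight-by-weight. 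Combining this cross-section with the gauge-equivariant bijection above yields the asserted one-to-one correspondence.
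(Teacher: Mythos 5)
Your first two paragraphs are essentially the paper's entire proof: the paper disposes of the corollary with the single remark that gauge transformations (equivalently, conjugation by a splitting $\phi$) are invertible, so that $\nabla^{\textnormal{split}}\mapsto\nabla_\phi=\phi^*\circ\nabla^{\textnormal{split}}\circ(\phi^{-1})^*$ is a bijection which, by Proposition \ref{prop:gaugeequiv}, is well defined and injective modulo gauge equivalence and independent of the choice of splitting from Theorem \ref{theorem:splitting}. Up to that point you and the paper agree.

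The problem is your third paragraph. You correctly observe that, read literally as a bijection of gauge classes, the corollary needs the extra claim that \emph{every} gauge class of weighted $A$-connections on $F_k^{\textnormal{split}}$ contains a linear representative; the paper does not address this, but your proposed proof of it does not work. The cancellation you describe is not ``a first-order algebraic operation on $P^I[w]$'': by \eqref{eqn:ChangeChristoffel}, a vertical automorphism $y^I_w\mapsto y^I_w+P^I[w](x,y_{<w})$ changes the weight-$w$ Christoffel symbols by
\[
Q_i^a\,\frac{\partial P^I[w]}{\partial x^a}\;+\;\Gamma_i^J[w_0]\,\frac{\partial P^I[w]}{\partial y^J_{w_0}}\;-\;(\text{terms linear in }P),
\]
which is a first-order \emph{differential} operator in $P^I[w]$, and there is one such equation for every value of the Lie algebroid index $i$ while the unknown $P^I[w]$ carries no index $i$. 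Concretely, for $F_2^{\textnormal{split}}=E_1\times_M E_2$ and $A=\sT M$ (compare Example \ref{exm:degree 2 christoffel}), removing the quadratic part $\tfrac12 y^\alpha y^\beta S^{\,\mu}_{\beta\alpha a}(x)$ of $\Gamma^\mu_a[2]$ by a shift $z^\mu\mapsto z^\mu+\tfrac12 y^\alpha y^\beta C^\mu_{\beta\alpha}(x)$ amounts to solving $\nabla_a C=-S_a$ for a single section $C$ of $\operatorname{Hom}(S^2E_1,E_2)$, i.e.\ to exhibiting the $\operatorname{Hom}(S^2E_1,E_2)$-valued one-form $S$ as a covariant differential. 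This overdetermined system is obstructed (already by a count of components when $\dim M\geq 2$, and by the curvature of the induced connection), so a generic weighted connection on a split graded bundle is \emph{not} gauge equivalent to a linear one, and the induction fails at the first nontrivial weight. The statement the paper actually establishes, and the one you should settle for, is the content of your first two paragraphs: the splitting identifies weighted $A$-connections on $F_k$ with weighted $A$-connections on $F_k^{\textnormal{split}}$ gauge-equivariantly, so that linear $A$-connections on $F_k^{\textnormal{split}}$ correspond, injectively and independently of the splitting modulo gauge, to weighted $A$-connections on $F_k$; surjectivity onto all gauge classes should not be claimed.
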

\begin{remark}
The question of the moduli space of all gauge equivalent (flat) weighted $A$-connections on a graded bundle reduces to the study of the moduli space of all gauge equivalent (flat) linear $A$-connections on the corresponding split graded bundle. That is, one needs only examine linear $A$-connections on Whitney sums of vector bundles. This is not at all  surprising as  any topological information about a graded bundle is insensitive to the choice of atlas we employ. Thus, from a \emph{topological} standpoint, graded bundles are no more than vector bundles in disguise. However, the \emph{geometry} of graded bundles is rich.
\end{remark}
\begin{example}\label{exm:RiemannianManifold}
Let $(M, g)$ be a Riemannian manifold. Then we canonically have the Levi--Civita connection. The Christoffel symbols of which we write as $\Gamma^c_{ba}(x)$. The transformation laws for the Christoffel symbols we write in the convenient form
$$\left( \frac{\partial x^{a'}}{\partial x^a} \right)\left( \frac{\partial x^{b'}}{\partial x^b} \right)\Gamma_{b'a'}^{c'} = \Gamma_{ba}^c \left( \frac{\partial x^{c'}}{\partial x^c} \right) + \frac{\partial^2 x^{c'}}{\partial x^b \partial x^a}.$$
Now consider the second order tangent bundle  $\sT^2 M$ (see Example \ref{exm:highertangent}). As is well known, given any symmetric affine connection we can construct a splitting
$$\phi : \sT^2 M \longrightarrow \sT M \times_M \sT M,  $$
which in local coordinates we write as
$$ \big ( x^a, y^b, z^c\big) \circ \phi = \left (x^a , x^{b,(1)}, x^{c,(2)}  {-} \half x^{a,(1)}x^{b,(1)}\Gamma_{ba}^c (x) \right),$$
in hopefully clear notation.  The reader can quickly see that the transformation rules for the Christoffel symbols for the Levi--Civita connection are such that this splitting is well defined. \par 
Let us assume that we have been given a linear $A$-connection on $\sT M$. This induces the obvious linear $A$-connection on $\sT M \times_M \sT M$, which as a weighted $A$-connection we write as
$$\nabla^{\textnormal{split}}  = \zx^i Q_i^a(x)\frac{\partial}{\partial x^a} + \frac{1}{2} \zx^i \zx^j Q_{ij}^k(x)\frac{\partial}{\partial \zx^k} + \zx^i y^b \Gamma_{bi}^c(x) \frac{\partial}{\partial y^c}  + \zx^i z^b \Gamma_{bi}^c(x) \frac{\partial}{\partial z^c}.$$
We can now use the canonical splitting induced by the Riemannian structure on $M$ to define a weighted $A$-connection on $\sT^2 M$  (see the proof of Theorem \ref{thm:existence connections}). A direct calculation produces 
\begin{align*}
&\nabla  = \zx^i Q_i^a\frac{\partial}{\partial x^a} + \frac{1}{2} \zx^i \zx^j Q_{ij}^k\frac{\partial}{\partial \zx^k} + \zx^i x^{b,(1)} \Gamma_{bi}^c \: \frac{\partial \hfill}{\partial  x^{c,(1)} \hspace{-15pt}}  \\
&+ \zx^i \left(x^{b,(2)} \Gamma_{bi}^c  + \frac{1}{2}x^{a,(1)}x^{b,(1)} \left( Q_i^d \frac{\partial \Gamma_{ba}^c}{\partial x^d} {+}\Gamma_{bd}^c\Gamma_{ai} ^d {-} \Gamma_{ba}^d\Gamma_{di}^c   {+} \Gamma_{ad}^c\Gamma_{bi} ^d  \right) \right)  \frac{\partial \hfill}{\partial  x^{c,(2)} \hspace{-15pt}}  \hspace{12pt} \,.
\end{align*}
Due to Proposition \ref{prop:gaugeequiv}, any other Riemannian structure on $M$ will produce a gauge equivalent weighted $A$-connection  on $\sT^2 M$ via this construction.  In other words, given a linear $A$-connection on $\sT M$, one can always choose an  auxiliary metric on $M$ to construct  a weighted $A$-connection on $\sT^2 M$, and any different choice of a metric leads to gauge equivalent weighted $A$-connections.
\end{example} 
\begin{example}
As a specialisation of the previous example, we can consider the Lie algebroid to be the antitangent bundle and take the connection to be the Levi--Civita connection. In this case, we can canonically construct a weighted connection on $\sT^2 M$ as
\begin{align*}
 &\Gamma^a_b[1]\big(x,x^{(1)} \big) = x^{c,(1)}\Gamma_{cb}^a(x),\\
 &\Gamma^a_b[2]\big(x,x^{(1)}, x^{(2)} \big) = x^{c,(2)}\Gamma_{cb}^a(x) 
 + \frac{1}{2} x^{c,(1)} x^{d,(1)}\left( \frac{\partial \Gamma_{dc}^a}{\partial x^b}(x) + \Gamma_{db}^e \Gamma_{ec}^a(x) {-} \Gamma_{dc}^e\Gamma_{eb}^a(x) {+} \Gamma_{cb}^e \Gamma_{ed}^a(x)\right).
\end{align*} 
The curvature of this weighted connection can be deduced by using
$$\nabla^2 =  \phi^* \circ (\nabla^{split})^2 \circ (\phi^{-1})^*\,,$$
due to natural properties of the Lie bracket for $\phi$-related vector fields.  A quick calculation shows that
\begin{align*}
\nabla^2 &= - \frac{1}{2} \rmd x^a \rmd x^b x^{c,(1)}R^f_{cba}(x)\:  \frac{\partial \hfill}{\partial  x^{f,(1)} \hspace{-15pt}}  \hspace{12pt}\\
& -\frac{1}{2} \rmd x^a \rmd x^b \left( x^{c,(2)} R^f_{cba}(x) + \frac{1}{2}x^{c,(1)} x^{d,(1)} \left(\Gamma^f_{de}R^e_{cba}(x) +   \Gamma^f_{ce}R^e_{dba}(x) - 2 \Gamma^e_{dc}R^f_{eba}(x)      \right)\right) \frac{\partial \hfill}{\partial  x^{f,(2)} \hspace{-15pt}}  \hspace{12pt}\,,
\end{align*}
where $R^a_{bcd}(x)$ are the components of the Riemann curvature tensor associated with the Levi--Civita connection. Thus, we see that if the Riemannian manifold $(M,g)$ is flat, then the curvature of the canonical weighted connection  vanishes. The converse statement is also true:  this is quite clear from the local expression for the curvature.
\end{example}
\subsection{The category of weighted connections}
A weighted connection we understand as a system $(\Pi A \times_M F, \rml, \rmh,  \rmd_A, \nabla)$.  For brevity, it will be convenient to write only $(\Pi A \times_M F, \nabla)$ as the other structures are implied by the `ingredients' needed to define a weighted $A$-connection. 
\begin{definition}
A \emph{morphism of two weighted connections}  $(\Pi A \times_M F, \nabla)$ and  $(\Pi A^\prime \times_{M^\prime} F^\prime, \nabla^\prime)$ is a morphism of supermanifolds 
$$\Phi : \Pi A \times_M F  \longrightarrow \Pi A^\prime \times_{M^\prime} F^\prime,$$
that satisfies the following further conditions:
 \begin{enumerate}
       \item $\Phi \circ \rml_t = \rml^\prime_t \circ \Phi$,
       \item  $\Phi \circ \rmh_s = \rmh^\prime_s \circ \Phi$,
 \end{enumerate}
 for all  $(s,t) \in \R^2$;
\begin{enumerate}
\setcounter{enumi}{2}
       \item $\nabla \circ \Phi^* = \Phi^* \circ \nabla^\prime.$
\end{enumerate}
\end{definition}
Evidently, such morphisms are composable and so we obtain the category of \emph{weighted connections}.

\noindent \textbf{Observations.} It is clear that underlying a morphism of weighted connections is a morphism of Lie algebroids $\phi :\Pi A \rightarrow \Pi A^\prime$. The category of flat weighted connections is defined in an obvious way and forms a full subcategory of the category of weighted connections.

\subsection{Quasi-actions of Lie algebroids on graded bundles}\label{subsec:Action}
The notion of an \emph{infinitesimal action of a Lie algebroid on a fibred manifold} was first given by Kosmann-Schwarzbach \& Mackenzie \cite{Kosmann-Schwarzbach:2002}, (also see Higgins \&  Mackenzie \cite{Higgins:1990}). For clarity, we give a slightly modified definition suited to our purposes.
\begin{definition}[Adapted from \cite{Kosmann-Schwarzbach:2002}]\label{def:qactions}
Let $(\Pi A, \rmd_A)$ be a Lie algebroid and $(F, \rmh)$ be a graded bundle both over the same base manifold $M$. Then an \emph{infinitesimal quasi-action of $A$ on $F$} is an $\R$-linear map
$$a : \Sec(A) \rightarrow \Vect(F),$$
that satisfies the following conditions:
\begin{enumerate}
\item the map $a$ is weight zero in the sense that $a(u)$ is a weight zero vector field;
\item the vector field $a(u)$ is projectable and projects to $\rho_u$;
\item  the map $a$ is $C^\infty(M)$-linear in the sense that $a(fu) = (\tau^*f)a(u)$;
\end{enumerate}
for all $u \in \Sec(A)$ and $f \in C^\infty(M)$.
In addition, if we have
\begin{enumerate}
\setcounter{enumi}{3}
\item  $a([u,v]_A) = [a(u), a(v)]$,
\end{enumerate}
for all $u$ and $v \in \Sec(A)$, then we have an \emph{infinitesimal action of $A$ on $F$}.
\end{definition}
We now want to show the relation between weighted $A$-connections and infinitesimal quasi-actions of Lie algebroids on graded bundles. To do this we first need to construct a notion of a horizontal lift within the framework we have thus presented. 
\begin{definition}\label{def:Hlift}
Let $\nabla \in \Vect(\tau^* \Pi A)$ be a weighted $A$-connection on a graded bundle $F$. Then the \emph{generalised horizontal lift} with respect to $\nabla$ is the $\R$-linear map defined as
\begin{align*}
H_\nabla  : & \:  \Sec(A) \longrightarrow \Vect(F)\\
 & u \mapsto H_\nabla(u) := \big( \rmp_F \big)_* [\nabla, \iota_u]\,.
\end{align*}
\end{definition}  
Note that for all $u$ the vector field $H_\nabla(u)$ is weight zero. In homogeneous local coordinates, we have
$$H_\nabla(u) = u^iQ_i^a \frac{\partial}{\partial x^a}  \: + \: u^i \Gamma_i^I[w]\frac{\partial}{\partial y_w^I}.$$
\begin{lemma}\label{lem:ProjLin}
Let $\nabla \in \Vect(\tau^* \Pi A)$ be a weighted $A$-connection on a graded bundle $F$. Then the generalised horizontal lift has the following properties:
\begin{enumerate}
\item $H_\nabla(u)(\tau^* f) = \tau^*(\rho_u(f))$;
\item $H_\nabla(fu) = (\tau^* f) H_\nabla(u)$,
\end{enumerate}
for all $u \in \Sec(A)$ and $f\in C^\infty(M)$. 
\end{lemma}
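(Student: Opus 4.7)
The plan is a direct local-coordinate verification, leveraging the explicit formula for $H_\nabla(u)$ already recorded just before the lemma statement. The first step is to pin down $H_\nabla(u)$ in coordinates: I would compute the graded commutator $[\nabla, \iota_u]$ with $\iota_u = u^i(x)\,\partial/\partial \zx^i$ by applying it to the coordinates $x^a$, $\zx^k$ and $y_w^I$ separately, using graded Leibniz. A short calculation shows that the $\partial/\partial x^a$ and $\partial/\partial y_w^I$ components carry coefficients $u^i Q_i^a(x)$ and $u^i \Gamma_i^I[w](x,y)$ respectively, both independent of $\zx$, while any $\zx$-dependence is confined to the $\partial/\partial \zx^k$ component. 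Hence $[\nabla,\iota_u]$ is $\rmp_F$-projectable, and its pushforward returns precisely the local expression
$$H_\nabla(u) = u^i Q_i^a(x)\frac{\partial}{\partial x^a} + u^i \Gamma_i^I[w](x,y)\frac{\partial}{\partial y_w^I}$$
already displayed in the excerpt.

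With this formula in hand, both claims are immediate. For (1), the pullback $\tau^* f$ depends only on the base coordinates $x^a$, so the $\partial/\partial y_w^I$ summand annihilates it and only $u^i Q_i^a \,\partial f/\partial x^a$ survives; by the coordinate formula $\rho_u(f) = u^i Q_i^a \,\partial f/\partial x^a$ recalled in Section~\ref{sec:preliminaries}, this equals $\tau^*(\rho_u(f))$. For (2), substituting $f u$ for $u$ in the local expression for $H_\nabla(u)$ simply factors the function $f$ (viewed as $\tau^* f$) out of each summand, yielding $(\tau^* f)\,H_\nabla(u)$.

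The only step requiring any thought is verifying the projectability of $[\nabla, \iota_u]$ along $\rmp_F$; this is the sole place where the special structure of $\nabla$ as a weighted $A$-connection (in particular, the $\zx$-linearity of its $\partial_{x^a}$ and $\partial_{y_w^I}$ coefficients) actually enters the argument, and it is an immediate inspection of the local formula. A coordinate-free variant is also available by writing $\iota_{fu} = \hat f\,\iota_u$ with $\hat f := (\tau \circ \rmp_F)^* f$ and applying the graded Leibniz rule $[\nabla, \hat f\,\iota_u] = \nabla(\hat f)\,\iota_u + \hat f\,[\nabla, \iota_u]$, noting that $\nabla(\hat f)$ has only $\zx$-proportional coefficients and hence vanishes upon pushforward; but for a lemma at this level the local computation is cleaner, and I would present it that way. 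I do not anticipate any genuine obstacle.
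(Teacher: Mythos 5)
Your proof is correct and follows essentially the same route as the paper, which simply declares both statements obvious from the global constructions and the local expression for $H_\nabla(u)$ displayed before the lemma. Your verification of the projectability of $[\nabla,\iota_u]$ and the coordinate computations merely supply the details the paper leaves implicit.
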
 
\begin{proof}
Both these statements are obvious from the global constructions and/or the local expressions.
\end{proof}
\noindent \textbf{Observations.} We can use either $\rmd_A$ or \emph{any} weighted $A$-connection $\nabla$ to define the Lie algebroid bracket (and the anchor) irrespective of if the weighted $A$-connection is flat or not.
\begin{lemma}\label{lem:liftmorph}
If $\nabla$ is a flat weighted $A$-connection, then the associated horizontal lift is Lie algebra morphism, i.e.,
$$H_\nabla\big( [u,v]_A \big) = [H_\nabla(u), H_\nabla(v)] .$$
\end{lemma}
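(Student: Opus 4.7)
The plan is to lift the entire computation to the ambient supermanifold $\tau^*\Pi A$. Set $\wt H(u) := [\nabla, \iota_u]$, a Grassmann-even vector field on $\tau^*\Pi A$. From the local formula for $\nabla$ one reads off that $\wt H(u)$ is $\rmp_F$-projectable and pushes forward to $H_\nabla(u)$; this is how $H_\nabla$ was defined in Definition \ref{def:Hlift}. Since pushforward along a submersion commutes with the graded bracket of projectable vector fields, it suffices to establish the stronger identity
$$[\wt H(u), \wt H(v)] \;=\; \wt H\bigl([u,v]_A\bigr) \qquad \text{in } \Vect(\tau^*\Pi A).$$

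Two ingredients are needed. The first is the derived-bracket identity $[[\nabla, \iota_u], \iota_v] = \iota_{[u,v]_A}$, which is exactly the observation preceding the lemma and holds for \emph{any} weighted $A$-connection, flat or not. Its content is that only the $\rmd_A$-like component of $\nabla$ contributes: the correction term $\zx^i\Gamma_i^I[w]\,\partial/\partial y^I_w$ bracketed with $\iota_u$ produces a purely $y$-directed vector field whose coefficients depend only on $(x,y)$, so a further bracket with the $\zx$-derivation $\iota_v$ vanishes identically. The second ingredient is flatness: because $\nabla$ is odd, graded Jacobi applied to $(\nabla, \nabla, \iota_v)$ yields
$$[\nabla, [\nabla, \iota_v]] = \tfrac12\bigl[[\nabla,\nabla], \iota_v\bigr] = [\nabla^2, \iota_v],$$
which vanishes when $\nabla^2 = 0$.

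With these in hand, I would expand $[\wt H(u), \wt H(v)] = [[\nabla, \iota_u], [\nabla, \iota_v]]$ by applying graded Jacobi to the triple $(\nabla, \iota_u, [\nabla, \iota_v])$ to obtain
$$[\wt H(u), \wt H(v)] = [\nabla, [\iota_u, [\nabla, \iota_v]]] + [\iota_u, [\nabla, [\nabla, \iota_v]]].$$
Flatness wipes out the second summand. For the first, another application of graded Jacobi on $(\iota_u, \nabla, \iota_v)$, together with the trivial vanishing $[\iota_u, \iota_v] = 0$ (the coefficients $u^i(x), v^j(x)$ are pulled back from $M$, so the two $\zx$-derivations anticommute on the nose), rewrites $[\iota_u, [\nabla, \iota_v]]$ as $[[\nabla, \iota_u], \iota_v]$, which the first ingredient identifies with $\iota_{[u,v]_A}$. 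Substituting gives $[\wt H(u), \wt H(v)] = [\nabla, \iota_{[u,v]_A}] = \wt H([u,v]_A)$, and pushing this identity down along $\rmp_F$ delivers the lemma.

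The only point demanding real care is sign bookkeeping in the repeated applications of graded Jacobi; beyond that, the flatness hypothesis enters in precisely one place, and the derived-bracket identity from the prior observation does all of the remaining work.
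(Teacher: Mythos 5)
Your proof is correct and takes essentially the same route as the paper's: both arguments rest on the derived-bracket identity $\iota_{[u,v]_A}=[[\nabla,\iota_u],\iota_v]$ (the observation preceding the lemma), two applications of the graded Jacobi identity, and flatness to annihilate the $[\nabla^2,\,\cdot\,]$ term. The only difference is organisational --- you work entirely upstairs on $\tau^*\Pi A$ and push down along $\rmp_F$ at the very end (making the projectability of $[\nabla,\iota_u]$ explicit), whereas the paper applies $(\rmp_F)_*$ throughout.
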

\begin{proof}
Using the observation above that we can replace $\rmd_A$ with $\nabla$ in the definition of the Lie algebroid bracket, we have that
$$H_\nabla\big( [u,v]_A\big) = \big( \rmp_F \big)_*  [ [\nabla ,[\nabla, \iota_u]], \iota_v].$$
Then using the Jacobi identity twice we arrive at 
$$\big( \rmp_F \big)_*  [ [\nabla ,[\nabla, \iota_u]], \iota_v] =\big( \rmp_F \big)_*[ [\nabla, \iota_u], [\nabla , \iota_v] ] + \frac{1}{2} \big( \rmp_F \big)_* [[[\nabla, \nabla], \iota_u], \iota_v].$$
From Definition \ref{def:Hlift} and the properties of the pushforward we have that
$$H_\nabla\big( [u,v]_A \big) = [H_\nabla(u), H_\nabla(v)] + [ [\nabla^2 , \iota_u], \iota_v], $$
noting that we can drop the pushforward in the second term. Then assuming $\nabla^2 =0$ we obtain the desired result.
\end{proof}
\begin{theorem}\label{thm:actions}
There is a one-to-one correspondence between infinitesimal actions of $A$ on $F$ and flat weighted $A$-connections on $F$. 
\end{theorem}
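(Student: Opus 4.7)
The plan is to construct mutually inverse maps between flat weighted $A$-connections and infinitesimal actions, and to verify that flatness of $\nabla$ corresponds to the bracket-preserving axiom. For the forward direction, given a flat weighted $A$-connection $\nabla$, set $a_\nabla := H_\nabla$, the generalised horizontal lift of Definition \ref{def:Hlift}. The local form $H_\nabla(u) = u^i Q_i^a\,\partial/\partial x^a + u^i \Gamma_i^I[w]\,\partial/\partial y_w^I$ is manifestly weight zero; Lemma \ref{lem:ProjLin} supplies projectability onto $\rho_u$ and $C^\infty(M)$-linearity, and Lemma \ref{lem:liftmorph} delivers the bracket axiom. Hence $a_\nabla$ is an infinitesimal action.

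For the backward direction, let $a : \Sec(A) \to \Vect(F)$ be an infinitesimal action. Over a chart $U \subset M$, choose a local frame $\{e_i\}$ of $A$; projectability and the weight-zero condition force
$$a(e_i) \;=\; Q_i^a \frac{\partial}{\partial x^a} + \Gamma_i^I[w](x,y)\,\frac{\partial}{\partial y_w^I},$$
with each $\Gamma_i^I[w]$ a polynomial of weight $w$ in the fibre coordinates. Define
$$\nabla_a \;:=\; \zx^i Q_i^a \frac{\partial}{\partial x^a} + \frac{1}{2!}\zx^i \zx^j Q_{ji}^k \frac{\partial}{\partial \zx^k} + \zx^i \Gamma_i^I[w]\,\frac{\partial}{\partial y_w^I}.$$
The $C^\infty(M)$-linearity of $a$, expressed in a second frame of $A$ and dualised to the corresponding change of $\zx$-coordinates on $\Pi A$, yields exactly the transformation law \eqref{eqn:ChangeChristoffel}; hence $\nabla_a$ glues to a global odd vector field of bi-weight $(0,1)$ on $\tau^*\Pi A$ projecting to $\rmd_A$, so is a weighted $A$-connection. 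A direct comparison of local components gives $H_{\nabla_a} = a$ and $\nabla_{H_\nabla} = \nabla$, so the two assignments are mutually inverse.

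It remains to show that $\nabla^2 = 0$ is equivalent to $H_\nabla$ preserving brackets. One implication is Lemma \ref{lem:liftmorph}. For the converse, the identity
$$H_\nabla([u,v]_A) - [H_\nabla(u), H_\nabla(v)] \;=\; (\rmp_F)_*\bigl[[\nabla^2, \iota_u], \iota_v\bigr]$$
extracted from that lemma's proof is valid for \emph{any} weighted $A$-connection. Since $\nabla^2$ is vertical over $\Pi A$ (because $\rmd_A^2 = 0$) and of bi-weight $(0,2)$, by \eqref{eqn:Curvature} it takes the local form $\tfrac{1}{2} \zx^i \zx^j R_{ij}^I(x,y)\,\partial/\partial y_w^I$; evaluating the identity on a frame and using $\iota_{e_i} = \partial/\partial\zx^i$, the bracket-preserving property forces $R_{ij}^I \equiv 0$ and thus $\nabla^2 = 0$. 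The main obstacle is the backward construction: confirming that the three quasi-action axioms are precisely what guarantees the locally defined $\Gamma_i^I[w]$ transform by \eqref{eqn:ChangeChristoffel} and glue to a global weighted $A$-connection. Once this gluing is secured, the equivalence between flatness and the action axiom follows transparently from the curvature formula \eqref{eqn:Curvature} and the non-degeneracy of the double $\zx$-contraction.
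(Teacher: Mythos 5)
Your proposal is correct and follows essentially the same route as the paper: the forward direction via the generalised horizontal lift together with Lemmas \ref{lem:ProjLin} and \ref{lem:liftmorph}, and the backward direction by reading off the Christoffel symbols from the local form forced by axioms (1)--(3) of Definition \ref{def:qactions} and invoking the transformation law \eqref{eqn:ChangeChristoffel}. Your explicit verification that bracket-preservation forces $\nabla^2=0$ (using the identity extracted from the proof of Lemma \ref{lem:liftmorph} and the non-degeneracy of the double $\zx$-contraction) spells out a converse step the paper leaves implicit, but it is the same argument in substance.
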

\begin{proof}
Starting from a weighted $A$-connection Definition \ref{def:Hlift} together with Lemma \ref{lem:ProjLin} we can construct an infinitesimal quasi-action of $A$ on $F$. If the  weighted $A$-connection is flat, then Lemma \ref{lem:liftmorph} tells us that we have an infinitesimal action. \par
In the other direction, (1), (2), (3) of Definition \ref{def:qactions} tell us that   the local form of any infinitesimal quasi-action must be 
$$a(u) = u^iQ_i^a \frac{\partial}{\partial x^a}  \: + \: u^i \Gamma_i^I[w]\frac{\partial}{\partial y_w^I},$$
and  we can identify $\Gamma_i^I[w](x,y)$ as the Christoffel symbols of a weighted $A$-connection due to the invariance of $a(u)$ under changes of homogeneous coordinates, i.e, we have the transformation law \eqref{eqn:ChangeChristoffel}. Thus we construct a weighted $A$-connection from an infinitesimal quasi-action.  Definition \ref{def:Hlift} together with Lemma \ref{lem:liftmorph} show that if we start with an infinitesimal action then we construct in this way a flat weighted $A$-connection.
\end{proof}
The generalised horizontal lift allows us to think of infinitesimal (grading preserving) diffeomorphisms of $F$  that are generated by sections of $A$ via
\begin{align}\label{eqn:QuasiAction}
& x^{a} \mapsto x^{a} + \epsilon \: u^{i}(x)Q_{i}^{a}(x), && y^{I}_{w} \mapsto y^{I}_{w} + \epsilon\: u^{i}(x)\Gamma_{i}^{I}[w](x,y),
\end{align}
\noindent where $\epsilon$ is an infinitesimal parameter  of weight zero.
\begin{remark}
In general one may require external parameters and constants that carry non-zero weight (possibly negative weight). However, we will not need such parameters in this paper and so will not dwell on the subtleties of their employment. 
\end{remark}
\begin{remark}
Brahic \&  Zambon \cite{Brahic:2017} in a preprint discuss $L_\infty$-actions of Lie algebroids on $\N$-manifolds, i.e., supermanifolds with an additional non-negative weight such that the Grassmann parity and this weight coincide (mod $2$). Given a Lie algebroid $A$ and an $\N$-manifold $\mathcal{M}$, they show that there is a one-to-one correspondence between such actions and homological vector fields on $ A[1]\times_M \mathcal{M}$ that project to the homological vector field $\rmd_A$  (also see Mehta \& Zambon \cite{Mehta:2012}). This is consistent with the constructions put forward in this paper, however, we are working in slightly different categories. In particular, the graded bundles discussed here are all real smooth manifolds and not supermanifolds. There is little doubt that the main ideas of the current paper can be generalised to weighted $A$-connections on graded super bundles. 
\end{remark}
\begin{remark}
From Lemma \ref{lem:liftmorph} we can understand the curvature of a weighted $A$-connection \eqref{eqn:Curvature} in a more classical way as the obstruction to the horizontal lift being a Lie algebra morphism. We can then define a `curvature tensor' as
\begin{align*}
R_\nabla(u,v) &:= H_\nabla\big([u,v]_A  \big) {-} [H_\nabla(u), H_\nabla(v)] \\
& = {-}u^i v^i \left( Q_j^a \frac{\partial \Gamma^I_i[w]}{\partial x^a}  \: {-}\:   Q_i^a \frac{\partial \Gamma^I_j[w]}{\partial x^a}  \: {-}\: Q_{ij}^k \Gamma_k^I[w] \right .\\
 & + \left .  \Gamma_j^J[w_0] \frac{\partial \Gamma_i^I[w]}{\partial y^J_{w_0}} \: {-} \:   \Gamma_i^J[w_0] \frac{\partial \Gamma_j^I[w]}{\partial y^J_{w_0}} \right) \frac{\partial}{\partial y_w^I}\, .
\end{align*} 
\end{remark}
\begin{example}
Continuing Example \ref{exm:RiemannianManifold}, we can write a quasi-action of $A$ on $\sT^{2} M$, where $(M, g)$ is a Riemannian manifold as
\begin{align*}
& x^a \mapsto x^a + \epsilon\:  u^i(x)Q_i^a(x),\\
& x^{a,(1)} \mapsto x^{a,(1)}  + \epsilon \: u^i(x)x^{b,(1)}\Gamma_{bi}^a(x)\\
& x^{a,(2)} \mapsto x^{a,(2)} +\epsilon \: u^i(x) \left(x^{b,(2)} \Gamma_{bi}^a(x)  + \frac{1}{2}x^{b,(1)}x^{c,(1)} \Gamma_{bci}^a(x) \right),
\end{align*}
where $u = u^i(x)e_i$ is a (local) section of $A$, and 
$$\Gamma_{bci}^a(x) := \left( Q_i^d(x) \frac{\partial \Gamma_{bc}^a}{\partial x^d}(x) {+}\Gamma_{ce}^a\Gamma_{bi} ^e(x) {-} \Gamma_{cb}^e\Gamma_{ei}^c (x)  {+} \Gamma_{be}^a\Gamma_{ci} ^a (x) \right).$$
\end{example}
\begin{example}
Consider a Poisson manifold $(M, \mathcal{P})$. Here we consider the Poisson structure as an even quadratic function on the anticotangent bundle $\Pi \sT^*M$. In natural adapted coordinates $(x^a, x^*_b)$, we have $\mathcal{P} = \half \mathcal{P}^{ab}(x)x^*_b x^*_a$. As is well-known, the anticotangent bundle comes equipped with a canonical Schouten structure (odd Poisson), which is locally given by
$$\SN{X,Y} = (-1)^{\widetilde{X}+1} \frac{\partial X}{\partial x^*_a} \frac{\partial Y}{\partial x^a} {-}  \frac{\partial X}{\partial x^a} \frac{\partial Y}{\partial x^*_a}\,, $$
and that the Jacobi identity for the Poisson bracket is equivalent to $\SN{\mathcal{P}, \mathcal{P}}=0$. Thus, $\big( \Pi \sT^* M,  \rmd_\mathcal{P}\big)$ is a Lie algebroid, where $\rmd_\mathcal{P} := \SN{\mathcal{P}, -}$ is the Lichnerowicz--Poisson differential. The associated Lie algebroid bracket on one-forms, i.e., sections of $\sT^*M$, is known as the Koszul bracket. \par 
Now consider a graded bundle $F$ over a Poisson manifold $(M , \mathcal{P})$.  In natural homogeneous coordinates a \emph{weighted contravariant connection} is given by
$$\nabla = \mathcal{P}^{ab}(x) x^*_b \frac{\partial}{\partial x^a} {-} \frac{1}{2!} \frac{\mathcal{P}^{bc}(x)}{\partial x^a} x^*_c x^*_b \frac{\partial}{\partial x^*_a} {+} \Gamma^{Ia}[w](x,y) x^*_a\frac{\partial}{\partial y^I_w}\,.$$
We know, via Theorem \ref{thm:existence connections}, that such weighted contravariant connections can always be found.  Thus, we can always construct a quasi-action generated by one-forms on $M$ acting on $F$ given by
\begin{align*}
x^a & \mapsto  x^a + \epsilon \: \mathcal{P}^{ab}(x) \omega_b(x),\\
y_w^I & \mapsto  y^I_w + \epsilon \: \Gamma^{Ia}[w](x,y)\omega_a(x),
\end{align*} 
where $\omega = \rmd x^a \omega_a(x)$ is a one-form and $\epsilon$ is an infinitesimal parameter  carrying no weight.
\end{example} 

\noindent \textbf{Statement.} The  quasi-action associated with a weighted $A$-connection gives rise to   grading preserving (infinitesimal) diffeomorphisms of the underlying graded bundle (see \eqref{eqn:QuasiAction}).  In this sense, we can view a weighted connection to be adapted to the structure of the graded bundle. 

\section{Connections adapted to multi-graded bundles} \label{sec:MultiWeightedCon}
\subsection{Multi-weighted $A$-connections}
 Recall that a multi-graded bundle or $n$-fold graded bundle $\big( F, \rmh^1, \cdots , \rmh^n \big)$ with $n \in \mathbb{N}$ is defined as a manifold with a collection of homogeneity structures that pair-wise commute, i.e., $\rmh^i_t \circ \rmh^j_s = \rmh^j_s \circ \rmh^j_t$  for any $i$ and $j = 1,2,\cdots, n$ and all $t \in \R$ and $s \in \R$ (see \cite{Grabowski:2009,Grabowski:2012}).   Homogeneous local coordinates can always be employed with the multi-weight taking values in $\mathbb{N}^n$. Similarly to the case of a graded bundle, the admissible changes of homogeneous coordinates respect the multi-weight. In this way we obtain an array of polynomial bundle structures over the base manifold $M := \rmh^1_0 \rmh^2_0 \cdots \rmh^n_0(F)$. 
\begin{example}
All n-fold vector bundles, e.g. double vector bundles, are examples of multi-graded bundles. Here each of the weights that make up the multi-weight is restricted to be either zero or one. 
\end{example}
 \begin{example}
 Following Example \ref{exm:highertangent}, iterated applications of the higher tangent functor give canonical examples of multi-graded bundles. Specifically, $\sT^k \big( \sT^l (\sT^m M)\big)$ is a triple graded bundle with the natural triple of homogeneity structures of degrees $k$, $l$, $m$ inherited from each higher tangent bundle. A coordinate system $(x^a)$ on $M$ gives rise to adapted or homogeneous coordinates $x^{a,(\alpha, \beta, \gamma)}$ of tri-weight $(\alpha, \beta, \gamma) \in \mathbb{N}^3$, where $0 \leq \alpha \leq k$, $0 \leq \beta \leq l$ and $0 \leq \gamma \leq m$. Given that a higher tangent bundle naturally has the structure of a graded bundle, the reader can easily convince themselves admissible changes of coordinates respect assignment of tri-weight.
 \end{example} 
 \begin{example}
 Given a graded bundle $F$ of degree $k$, the $l$-th order tangent bundle, $\sT^l F$ is naturally a double graded bundle. One of the homogeneity structures is the canonical homogeneity structure associated with the higher tangent bundle, while the other is the lift of the homogeneity structure on $F$. Homogeneous coordinates on $F$ give rise, via the $(\alpha)$-lift, to adapted or homogeneous coordinates on  $\sT^l F$. The reader can quickly convince themselves that admissible changes of coordinates respect the bi-weight. 
 \end{example}
 The notion of a multi-weighted $A$-connection follows verbatim from the notion of a weighted $A$-connection (Definition \ref{def:weighted connection}).  Consider an $n$-fold graded bundle $\big( F, \rmh^1, \cdots , \rmh^n \big)$ and a Lie algebroid $\big( \Pi A, \rmd_A, \rml \big)$ both over the same base manifold $M$.  Then as before, we can consider $\tau^* \Pi A \simeq \Pi A, \times_M F$ which is an $n+1$-fold graded super bundle with the ordering of the homogeneity structures being $(\rmh^1, \cdots ,\rmh^n , \rml)$. 
 \begin{definition}\label{def:multiweighted connection}
With the above notation, a \emph{multi-weighted $A$-connection} on an $n$-fold graded bundle $F$ is an odd vector field $\nabla \in \Vect\big(\tau^*\Pi A \big )$ of  $n+1$-weight $(0,0, \cdots, 0,1)$ that projects to $\rmd_{A} \in \Vect(\Pi A)$. If the Lie algebroid is the tangent bundle, i.e, $A = \sT M$, then we simply speak of a \emph{multi-weighted connection}. If a multi-weighted $A$-connection is `homological', i.e., $\nabla^2=0$, then we have a \emph{flat multi-weighted $A$-connection}.
\end{definition}
All of the main statements of this paper push through to the case of multi-graded bundles with minimal effort by taking into account the additional weights - we will give some details for double vector bundles in the next subsection. In particular, we have the following proposition.
\begin{proposition}\label{prop:multiweighted connection morphism}
A multi-weighted $A$-connection on an $n$-fold graded bundle $F$ is equivalent to a morphism in the category of  $n+1$-fold graded super bundles 
$$h_\nabla :  \tau^* \Pi A \longrightarrow  \Pi \sT F,$$
such that the following diagram is commutative
\begin{center}
\leavevmode
\begin{xy}
(0,20)*+{ \tau^*\Pi A}="a"; (30,20)*+{\Pi \sT  F}="b";
(0,0)*+{\Pi A}="c"; (30,0)*+{\Pi \sT M}="d";%
{\ar "a";"b"}?*!/_3mm/{h_\nabla};%
{\ar "a";"c"}?*!/^4mm/{\rmp_{\Pi A}};%
{\ar "b";"d"}?*!/_6mm/{(\sT \tau)^\Pi};%
{\ar "c";"d"}?*!/^3mm/{\rho^\Pi};%
\end{xy}
\end{center}
\end{proposition}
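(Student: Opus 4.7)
The plan is to imitate the proof of Proposition \ref{prop:weighted connection morphism} essentially verbatim, with the single weight $w$ replaced by a multi-weight $\mathbf{w} = (w_1, \dots, w_n) \in \mathbb{N}^n$. The key observation is that the commutative diagram, being purely categorical, is stable under enriching the grading data: a morphism of $n{+}1$-fold graded super bundles over $\rho^\Pi$ is specified locally by components whose $n{+}1$-weights are read off from the target coordinates, and the constraint that $h_\nabla$ covers $\rho^\Pi$ immediately fixes the coordinates of weight $(0,\dots,0,1)$ to be $\xi^i Q_i^a(x)$.

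First I would fix homogeneous coordinates $(x^a, y_{\mathbf w}^I)$ on $F$ adapted to all $n$ homogeneity structures simultaneously (whose existence is the content of the multi-graded analogue of Theorem \ref{theorem:1}), and augment them with $\xi^i$ on $\Pi A$ to obtain coordinates on $\tau^*\Pi A$ of $n{+}1$-weight $(0,\dots,0,0)$, $(\mathbf w, 0)$ and $(0,\dots,0,1)$ respectively. On $\Pi \sT F$ the associated coordinates are $(x^a, y_{\mathbf w}^I, \mathrm{d}x^b, \mathrm{d}y_{\mathbf w}^J)$, where $\mathrm{d}x^b$ has $n{+}1$-weight $(0,\dots,0,1)$ and $\mathrm{d}y_{\mathbf w}^J$ has $n{+}1$-weight $(\mathbf w, 1)$, since the de Rham differential on $F$ has weight $1$ in the tangent direction and is neutral with respect to every homogeneity structure $\rmh^i$. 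Any morphism of $n{+}1$-fold graded super bundles $h_\nabla$ over $\rho^\Pi$ must therefore take the local form
\begin{equation*}
(x^a, y_{\mathbf w}^I, \mathrm{d}x^b, \mathrm{d}y_{\mathbf w}^J) \circ h_\nabla = (x^a, y_{\mathbf w}^I, \xi^i Q_i^b(x), \xi^j \Gamma_j^J[\mathbf w](x,y)),
\end{equation*}
where the multi-weight constraint forces $\Gamma_j^J[\mathbf w]$ to be a polynomial in the fibre coordinates whose homogeneous component of each multi-weight $\mathbf w'$ vanishes unless $\mathbf w' = \mathbf w$ (and similarly polynomial in $x$ in the smooth sense).

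Next I would carry out the transformation-law check. Under an admissible change of coordinates on $\tau^*\Pi A$, the pair $(x^a, y_{\mathbf w}^I)$ transforms respecting the multi-weight (the multi-graded analogue of \eqref{eqn:translawsGr}) and $\xi^i$ transforms linearly as $\xi^{i'} = \xi^j T_j{}^{i'}(x)$; on $\Pi \sT F$, $\mathrm{d}x^b$ and $\mathrm{d}y_{\mathbf w}^J$ transform as the differentials of the base and fibre coordinates. Requiring the two expressions for $h_\nabla$ in overlapping charts to agree yields the multi-graded analogue of \eqref{eqn:ChangeChristoffel} for the components $\Gamma_j^J[\mathbf w]$, which is precisely the transformation law needed for $\nabla := \xi^i Q_i^a \partial_{x^a} + \tfrac{1}{2}\xi^i \xi^j Q_{ji}^k \partial_{\xi^k} + \xi^i \Gamma_i^I[\mathbf w] \partial_{y_{\mathbf w}^I}$ to be a globally defined vector field on $\tau^*\Pi A$ of $n{+}1$-weight $(0,\dots,0,1)$ projecting to $\rmd_A$. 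Conversely, given such a $\nabla$, the local data of its Christoffel symbols define $h_\nabla$ by the formula above, and the transformation law ensures global well-definedness.

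The only genuine content beyond Proposition \ref{prop:weighted connection morphism} is verifying that all multi-weights match correctly; this is bookkeeping rather than a conceptual obstacle, since each $\rmh^i$ acts independently on the respective coordinates and lifts in an obvious way to $\Pi \sT F$. The main potential pitfall is making sure that the projection to $\Pi \sT M$ — whose coordinates $(x^a, \mathrm{d}x^b)$ carry trivial $\rmh^i$-weight for all $i = 1,\dots,n$ — is compatible with the anchor $\rho^\Pi$; this is automatic because the $Q_i^a(x)$ depend only on the weight-zero coordinates, and the diagram commutes by construction.
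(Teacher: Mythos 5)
Your proposal is correct and follows essentially the same route as the paper, which simply asserts that the proof of Proposition \ref{prop:weighted connection morphism} ``pushes through with minimal effort by taking into account the additional weights'': you read off the forced local form of $h_\nabla$ from the $n{+}1$-weight assignments, identify the components $\Gamma^J_j[\mathbf{w}]$ with Christoffel symbols via the transformation law, and reverse the construction. The weight bookkeeping you supply (in particular that $\rmd x^b$ and $\rmd y^J_{\mathbf w}$ carry $n{+}1$-weights $(0,\dots,0,1)$ and $(\mathbf w,1)$ on $\Pi\sT F$) is exactly the content the paper leaves implicit.
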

\noindent \textbf{Statement.} In light of Proposition  \ref{prop:multiweighted connection morphism} we view multi-weighted $A$-connection  as being adapted to the structure on   $n$-fold graded bundles in the sense that the morphism   $h_\nabla :  \tau^* \Pi A \rightarrow  \Pi \sT F$ is not just a bundle morphism but a morphism of $n+1$-fold graded super bundles.
\subsection{Connections adapted to double vector bundles} 
 Rather than discuss full generalities, we concentrate on the example of bi-weighted $A$-connections on double vector bundles. As far as we are aware, a notion of connections adapted to the structure of double vector bundles has not appeared in the literature before. In part, we expect this is because the original definition of Pradines \cite{Pradines:1974} is far less `geometric' in nature and complicated as compared to the reformulation in terms of commuting regular homogeneity structures.\par 
Let $\big( D, \rmh^1, \rmh^2  \big )$ be a double vector bundle. Recall that here the homogeneity structures are regular (see \eqref{eqn:HomoReg}), and that they commute in the sense that 
$$\rmh^1_t \circ \rmh^2_s  =  \rmh^2_s \circ \rmh^1_t,$$
 for all  $(t,s) \in \R^2$. Following \cite{Grabowski:2009}, we know that any double vector bundle can be equipped with homogeneous local coordinates
 $$\big( \underbrace{x^a}_{(0,0)},\: \underbrace{y^\alpha}_{(0,1)},\: \underbrace{z^\mu}_{(1,0)} ,\:  \underbrace{w^l}_{(1,1)}  \big),$$
 where we have indicated the bi-weight. Admissible changes of homogeneous coordinates are of the form
 \begin{align*}
 & x^{a'} = x^{a'}(x), && y^{\alpha'} = y^\beta T_{\beta}^{\:\: \alpha'}(x),\\
 & z^{\mu'} = z^\nu T_\nu^{\:\: \mu'}(x), &&  w^{l'}= w^m T_m^{\:\: l'}(x) + z^\nu y^\alpha T_{\alpha \nu}^{\:\:\: \mu'}(x).
 \end{align*}
We define the `side vector bundles'  of $D$ as $\rmh^1_0(D) := E^1$ and $\rmh^2_0(D) := E^2$, which come with naturally induced homogeneous coordinates $(x^a, y^\alpha)$ and  $(x^a, z^\mu)$, respectively. The core vector bundle $C$ is defined as all the elements in $D$ that project to zero under both $\rmh^1_0$ and $\rmh^2_0$. The core comes with naturally induced homogeneous coordinates $(x^a, w^l)$. The base manifold  we define as $ M := \rmh_0^1(\rmh^2_0 D) = \rmh_0^2(\rmh^1_0 D)$. 
 \begin{example}
A \emph{split double vector bundle} is a double vector bundle of the form $D = E_{(0,1)} \times_M E_{(1,0)} \times_M E_{(1,1)}$, where each $E \rightarrow M$ is a vector bundle were the bi-weight of the fibre coordinates has been indicated.
\end{example}
 \begin{example}
  Well-known examples of double vector bundles include $\sT(\sT M)$, $\sT^*(\sT M) \cong \sT(\sT^* M)$, and $\sT^*(\sT^* M)$. The reader can quickly check that  admissible changes of homogeneous coordinates is of the required form. 
 \end{example}
 \begin{example}
 Similarly to the  previous example, if $\tau : E \rightarrow M$ is a vector bundle, then $\sT E$ and $\sT^*E$ are both double vector bundles. Again, the reader can quickly check the admissible changes of homogeneous coordinates.  
 \end{example}

 A bi-weighted $A$-connection $\nabla$ on $D$  is an odd vector field of tri-weight $(0,0,1)$ on $\tau^* \Pi A$ (see Definition \ref{def:multiweighted connection}). In homogeneous coordinates any  bi-weighted $A$-connection is of the following form:
$$\nabla  = \zx^{i}Q_{i}^{a}(x)\frac{\partial}{\partial x^{a}} + \frac{1}{2!}\zx^{i}\zx^{j}Q_{ji}^{k}(x)\frac{\partial}{\partial \zx^{k}} +  \zx^{i} y^\alpha \Gamma_{\alpha i}^\beta(x) \frac{\partial}{\partial y^\beta} + \zx^{i} z^\mu \Gamma_{\mu i}^\nu(x) \frac{\partial}{\partial z^\nu} + \zx^i \big( w^l \Gamma_{li}^m(x) + z^\mu y^\alpha \Gamma_{\alpha \mu i}^m(x)\big)\frac{\partial}{\partial w^m}\,. $$

 \noindent \textbf{Observations.} A bi-weighted $A$-connection $\nabla$ on $D$ is projectable to a bi-weighted $A$-connection on $E^1 \times_M E^2$, or equivalently, a pair of weighted $A$-connections  one on $E^1$ and the other on $E^2$. Moreover, we also have the underlying structure of a weighted $A$-connection on $C$. \par 
 In terms of a morphism of tri-graded super bundles $h_\nabla : \tau^* \Pi A \rightarrow \Pi \sT D$, we have
 \begin{align*}
 & \rmd x^a \circ h_\nabla = \zx^i Q_i^a(x),
 && \rmd y^\alpha \circ h_\nabla = \zx^i y^\beta \Gamma_{\beta i}^\alpha(x), \\
 & \rmd z^\mu \circ h_\nabla = \zx^i z^\nu \Gamma_{\nu i}^\mu(x),
 && \rmd w^l \circ h_\nabla = \zx^i \big( w^l \Gamma_{li}^m(x) + z^\mu y^\alpha \Gamma_{\alpha \mu i}^m(x)\big).
\end{align*}  
 In exactly the same way as for weighted $A$-connections we can construct a generalised horizontal lift (see Definition \ref{def:Hlift}) and an infinitesimal  quasi-action of the Lie algebroid on $D$ (see  \eqref{eqn:QuasiAction}). In homogeneous coordinates the quasi-action is given by
 \begin{align*}
  &  x^a \mapsto x^a  + \epsilon \:u^i(x) Q_i^a(x),
 && y^\alpha \mapsto  y^\alpha + \epsilon \:  u^i(x) y^\beta \Gamma_{\beta i}^\alpha(x), \\
 &  z^\mu \mapsto  z^\mu + \epsilon \: u^i(x) z^\nu \Gamma_{\nu i}^\mu(x),
 && w^l \mapsto   w^l   + \epsilon u^i(x) \: \big( w^l \Gamma_{li}^m(x) + z^\mu y^\alpha \Gamma_{\alpha \mu i}^m(x)\big),
\end{align*} 
for any section $u \in \Sec(A)$. Naturally, these infinitesimal diffeomorphisms respect the bi-grading, i.e., they preserve the structure of the double vector bundle $D$.\par 
Any double vector bundle $D$ is noncanonically isomorphic to a split double vector bundle of the form  $D^\textnormal{split} :=  E^1 \times_M E^2 \times_M C$. Then same arguments as in the proof of Theorem \ref{thm:existence connections}, together with Proposition \ref{prop:gaugeequiv} and Corollary \ref{coro:gaugeequiv} we arrive at the following theorem that we shall end on.
\begin{theorem}\label{thm:existence bi connections}
Let $\big(D, \rmh^1, \rmh^2\big)$ be a double vector bundle over $M$, i.e., $\rmh_0^1(\rmh^2_0 D)=:M$. Furthermore, let $\big( \Pi A,\rmd_A, \rml\big)$ be a Lie algebroid over $M$. Then the set of   bi-weighted $A$-connections on $D$ is non-empty. Moreover, modulo gauge equivalence, there is a one-to-one correspondence between bi-weighted $A$-connections on $D$ and linear $A$-connections on $ E^1 \times_M E^2 \times_M C$.
\end{theorem}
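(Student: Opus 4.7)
The plan is to mimic exactly the strategy used for Theorem \ref{thm:existence connections}, replacing the role of the Batchelor--Gawe\c{d}zki-type splitting for graded bundles with the analogous fact that every double vector bundle is (non-canonically) isomorphic, as a double vector bundle, to a split one. First I would invoke the splitting theorem for double vector bundles: any $(D,\rmh^1,\rmh^2)$ admits an isomorphism
\[
\phi : D \longrightarrow D^{\textnormal{split}} := E^1 \times_M E^2 \times_M C
\]
that intertwines the pairs of homogeneity structures and acts as the identity on $M$. The side and core vector bundles are read off from $D$ as described in the preceding subsection, and $\phi$ is non-canonical but always exists.

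Next, I would construct a bi-weighted $A$-connection on $D^{\textnormal{split}}$ directly from linear $A$-connections on each factor. By K\v{r}i\v{z}ka's lemma (already used in the proof of Lemma \ref{lem:split connections}), each of $E^1$, $E^2$ and $C$ admits a linear $A$-connection, with local Christoffel symbols $\Gamma_{\alpha i}^\beta(x)$, $\Gamma_{\mu i}^\nu(x)$, $\Gamma_{li}^m(x)$ respectively. Summing contributions on the Whitney sum, one obtains
\[
\nabla^{\textnormal{split}} = \zx^i Q_i^a \tfrac{\partial}{\partial x^a} + \tfrac{1}{2}\zx^i\zx^j Q_{ji}^k \tfrac{\partial}{\partial \zx^k} + \zx^i y^\alpha \Gamma_{\alpha i}^\beta \tfrac{\partial}{\partial y^\beta} + \zx^i z^\mu \Gamma_{\mu i}^\nu \tfrac{\partial}{\partial z^\nu} + \zx^i w^l \Gamma_{li}^m \tfrac{\partial}{\partial w^m},
\]
with vanishing cross term $\Gamma_{\alpha\mu i}^m$. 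A short inspection shows $\nabla^{\textnormal{split}}$ has the required tri-weight $(0,0,1)$ on $\tau^*\Pi A \to D^{\textnormal{split}}$ and projects to $\rmd_A$, so it is genuinely a bi-weighted $A$-connection in the sense of Definition \ref{def:multiweighted connection}.

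To transport this back to $D$, I would set
\[
\nabla_\phi := \phi^* \circ \nabla^{\textnormal{split}} \circ (\phi^{-1})^*,
\]
more precisely using the induced isomorphism $(\Id_{\Pi A},\phi): \tau^*\Pi A \to \tau^*\Pi A|_{D^{\textnormal{split}}}$. Because $\phi$ intertwines both homogeneity structures of $D$ with those of $D^{\textnormal{split}}$ and fixes the Lie algebroid factor, the pushforward preserves tri-weight $(0,0,1)$ and still projects to $\rmd_A$. This proves the existence statement.

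For the correspondence modulo gauge equivalence, the argument is essentially a verbatim repetition of Proposition \ref{prop:gaugeequiv} and Corollary \ref{coro:gaugeequiv}, with the gauge group now defined as the automorphisms of the tri-graded super bundle $\tau^*\Pi A$ covering the identity on $\Pi A$. Given two splittings $\psi,\phi:D\to D^{\textnormal{split}}$, the map $\varphi := \phi^{-1}\circ\psi$ is an automorphism of $D$ over $M$ preserving both homogeneity structures, hence a gauge transformation, and it satisfies $\nabla_\phi^{\varphi} = \nabla_\psi$ by construction. Conversely, any bi-weighted $A$-connection on $D$ can be transported via a chosen splitting to a bi-weighted $A$-connection on $D^{\textnormal{split}}$, which, since the cross term $\Gamma_{\alpha\mu i}^m$ lives in a $C^\infty(M)$-module that can be absorbed by a further gauge transformation coming from a vertical automorphism of the core, is gauge equivalent to a genuine linear $A$-connection on $E^1\times_M E^2\times_M C$. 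I expect the only non-formal step to be verifying carefully that such absorption of the cross term by a gauge transformation is always possible; this amounts to solving a pointwise affine problem in the core coordinates, which has a solution by the same partition-of-unity argument that underlies K\v{r}i\v{z}ka's lemma.
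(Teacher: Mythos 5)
Your overall strategy coincides with the paper's: the published proof of this theorem is a single sentence observing that $D$ is non-canonically isomorphic to the split double vector bundle $E^1\times_M E^2\times_M C$ and then deferring to ``the same arguments'' as Theorem \ref{thm:existence connections}, Proposition \ref{prop:gaugeequiv} and Corollary \ref{coro:gaugeequiv}. Your construction of $\nabla^{\textnormal{split}}$ from K\v{r}i\v{z}ka's lemma applied to each of $E^1$, $E^2$ and $C$, the transport $\nabla_\phi=\phi^*\circ\nabla^{\textnormal{split}}\circ(\phi^{-1})^*$ along a splitting, and the gauge equivalence of the outcomes of two splittings via $\varphi=\phi^{-1}\circ\psi$ are precisely the intended details, so the existence claim and the independence-of-splitting claim are in order.

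The one place where you go beyond the paper is the final step, and it is also the step that fails as written. You assert that, after transporting an arbitrary bi-weighted $A$-connection to $D^{\textnormal{split}}$, the cross term $\Gamma_{\alpha\mu i}^m$ can always be absorbed by a further gauge transformation, ``by the same partition-of-unity argument that underlies K\v{r}i\v{z}ka's lemma''. Under the relevant gauge transformation $w^l\mapsto w^l+z^\mu y^\alpha S_{\alpha\mu}^l(x)$ the cross term shifts by a covariant-derivative-type expression
$$(\nabla_i S)^l_{\alpha\mu}\;=\;Q_i^a\frac{\partial S^l_{\alpha\mu}}{\partial x^a}\;+\;\Gamma_{\alpha i}^\beta S^l_{\beta\mu}\;+\;\Gamma_{\mu i}^\nu S^l_{\alpha\nu}\;-\;S^m_{\alpha\mu}\Gamma^l_{mi}\,,$$
i.e.\ by the covariant differential of $S\in\Sec\big((E^1)^*\otimes (E^2)^*\otimes C\big)$ with respect to the induced linear $A$-connection (the $\GL$-type gauge parameters only conjugate the cross term). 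Killing the cross term therefore requires it to lie in the image of this covariant differential, which is not surjective in general: for a Lie algebroid with trivial anchor and with the induced Christoffel symbols chosen to vanish, the shift is identically zero and a non-zero cross term is a gauge invariant. A partition of unity does not help, because the obstruction is not an affine or convexity issue but the failure of surjectivity of $\nabla$. Note that the paper does not attempt this absorption either; its correspondence, as in Corollary \ref{coro:gaugeequiv}, is the assignment of the gauge class of $\nabla_\phi$ to a linear $A$-connection on $E^1\times_M E^2\times_M C$, well defined by Proposition \ref{prop:gaugeequiv}. You should either state the correspondence in that form and drop the absorption claim, or supply a genuinely different argument for the surjectivity direction.
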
 
\section*{Acknowledgements}
We thank  Janusz Grabowski for his comments on earlier drafts of this work. Furthermore, we cordially thank the anonymous referee for their valuable comments and suggestions that helped to improve the overall presentation of this paper.


\end{document}